\documentclass[final,5p,times]{elsarticle}
\usepackage{multirow}
\usepackage{booktabs}
\usepackage{amssymb}
\usepackage{mathrsfs}        
\usepackage{amsthm}          
\usepackage{amsmath}         
\usepackage{epstopdf}
\biboptions{numbers,sort&compress}
\usepackage[colorlinks, citecolor=blue]{hyperref}
\hypersetup{colorlinks=false}                       
\usepackage{xcolor}  
\definecolor{b}{rgb}{0,0,1}
\usepackage{caption2}                   
\theoremstyle{plain}
\newtheorem{theorem}{Theorem}

\newtheorem{corollary}{Corollary}
\newtheorem{lemma}{Lemma}

\theoremstyle{definition}

\newtheorem{remark}{Remark}

\hyphenpenalty=5000
\tolerance=1000
\journal{IFAC Journal of Systems and Control}

\begin{document}

\begin{frontmatter}



\title{Adaptive backstepping control for FOS with nonsmooth nonlinearities}
\phantomsection
\addcontentsline{toc}{title}{Adaptive backstepping control for FOS with nonsmooth nonlinearities}


\author{Dian Sheng}
\author{Yiheng Wei}
\author{Songsong Cheng}
\author{Yong Wang\corref{cor1}}
\ead{yongwang@ustc.edu.cn}
\cortext[cor1]{Yong Wang is the corresponding author. }
\address{Department of Automation, University of Science and Technology of China, Hefei, 230026, China}

\begin{abstract}
This paper proposes an original solution to input saturation and dead zone of fractional order system. To overcome these nonsmooth nonlinearities, the control input is decomposed into two independent parts by introducing an intermediate variable, and thus the problem of dead zone and saturation transforms into the problem of disturbance and saturation afterwards. With the procedure of fractional order adaptive backstepping controller design, the bound of disturbance is estimated, and saturation is compensated by the virtual signal of an auxiliary system as well. In spite of the existence of nonsmooth nonlinearities, the output is guaranteed to track the reference signal asymptotically on the basis of our proposed method. Some simulation studies are carried out in order to demonstrate the effectiveness of method at last.
\end{abstract}

\begin{keyword}
Fractional order systems; Adaptive backstepping control; Nonsmooth nonlinearities; Dead zone; Saturation
\end{keyword}

\end{frontmatter}


\section{Introduction}\label{Section 1}
When we deal with practical control problems, it is inevitable to face the troubles caused by the presence of real physical components, which often contains nonsmooth nonlinearities \cite{Zhou:2008Book}. Nonsmooth nonlinearities always lead to such unexpected result that they have become an independent and new field of research. Of all kinds of nonsmooth nonlinearities, the saturation and dead zone are the most common and significant since almost all actuators are enslaved to such two nonlinearities to some extent. As a static input-output characteristic, dead zone often appears in gears \cite{Zuo:2015ISAT,Zuo:2016TCST}, valves \cite{Xu:2016ISAT}, motors \cite{Liu:2013NODY}, robot arms \cite{Jiang:2015NODY} and even elevators of aircraft \cite{Xu:2015NODY}. As for saturation, the motor speed is limited due to physical constraints, the output of the operational amplifier won't be larger than its supply voltage and the finite word length results in overflow in computer. With the purpose of compensating dead zone and saturation, many scholars are dedicated to these nonlinearities and obtain abundant results \cite{Carnevale:2014TAC,Shen:2016TIE,Chen:2012AMM,Chen:2012JVC}. Be that as it may, when it comes to fractional order systems (FOS), there are few results about how to deal with dead zone and saturation.

As is known to all, fractional order system and control develop rapidly during last decades. There are two factors that lead to this prosperity. First, fractional order calculus could describe many engineering plants and processes precisely. The second factor is the superiority of fractional order controllers, i.e., design freedom and robust ability \cite{Chen:2016CNSNS,Du:2016NODY,Yin:2014Automatica,Chen:2016NODY,Li:2009Automatica}. Therefore, a large number of exciting results about FOS such as stability analysis \cite{Lu:2010TAC,Li:2012JFI,Shen:2015TAC}, controllability and observability \cite{Yan:2011JFI}, signal processing \cite{Acharya:2014SP}, numerical computation \cite{Wei:2016ISAT,Wei:2014IJCAS,Sheng:2011JFI}, system identification \cite{Petras:2012TSP,Hu:2016IJSS}  and   controller synthesis \cite{Luo:2012Automatica} have been achieved. As the FOS develop, some potential problems, especially dead zone and saturation, which were once ignored, gradually attract the attention of scholars now.

Although great efforts have been made to control FOS with dead zone and saturation, there are still few valuable results so far. Machodo reveals the superior performance of fractional order controller in the control of systems with nonlinear phenomena and proves it separately in the presence of saturation, dead zone, hysteresis and relay \cite{Machado:1997SAMS}. By exploiting the saturation function with sector bounded condition, Lim puts forward a method that makes fractional order linear systems with input saturation asymptotically stable \cite{Lim:2013TAC}. Based on $t^{-\alpha}$ asymptotical stability, Shahri analyzes the stability of the same system by means of direct Lyapunov method \cite{Shahri:2015AML}. In recent paper \cite{Shahri:2015IJAC}, Shahri also studies the similar problem but disturbance rejection is taken into consideration this time. Considering both their works on linear system, Luo lately discusses the saturation problem of nonlinear FOS \cite{Luo:2014MPE}. In the case of FOS with dead zone, sliding mode control and switching adaptive control have been applied by confining the control input to a sector area \cite{Abooee:2013CTA,Tian:2014Entropy,Roohi:2015Complexity}. The indirect Lyapunov method is introduced to control fractional order micro electro mechanical resonator with dead-zone input \cite{Tian:2015JCND}. Even if some significant works have been done, it's still far from perfection. There remains room to improve current research.
\vspace{-0.1cm}
\begin{itemize}
\item Dead zone and saturation are studied separately while they often take effect at the same time.
\item The form of dead zone and saturation should not be confined.
\item The order of the FOS could be extended to incommensurate one.
\item In addition to stabilization, the goal of tracking is supposed to be achieved.
\item The uncertain nonlinear FOS  deserves deeply researching.
\item External disturbance cannot be ignored in reality.
\end{itemize}

Therefore, this paper proposes a control method for uncertain nonlinear incommensurate FOS in the presence of dead zone and saturation. To the best of our knowledge, no scholar has ever investigated the input saturation and dead zone of FOS simultaneously. Our proposed method is creative and original not only in FOS but also integer order systems. First, by introducing intermediate variable, the input saturation and dead zone are decomposed into two independent parts where the dead zone part needs decomposing further. Thus the problem of input saturation and dead zone is transformed into the the problem of input saturation and bounded disturbance. Second, a fractional order auxiliary system is constructed to generate virtual signals which are used for compensating saturation. Third, based on fractional order adaptive backstepping control (FOABC) \cite{Wei:2015NEU,Wei:2016NODY,Ding:2014CTA,Ding:2015NODY}, the objective of tracking reference signal has been achieved in the end. Considering not all parameters of input saturation and dead zone available in practice, the entirely unknown case of nonsmooth nonlinearities is seriously studied as well.

The remainder of this article is organized as follows. Section \ref{Section 2} introduces the problem formulation and provides some basic knowledge for subsequent use. After model transformation and input decomposition, adaptive backstepping control strategy is recommended for the known and unknown cases of parameters in Section \ref{Section 3}. In Section \ref{Section 4}, simulation results are provided to illustrate the validity of the proposed approach. Conclusions are given in Section \ref{Section 5}.

\section {Preliminaries}\label{Section 2}
Let us consider the following parameter strict-feedback of uncertain nonlinear incommensurate FOS with nonsmooth nonlinearities and disturbance
\begin{eqnarray}\label{Eq2-1}
\left\{ \begin{array}{rl}
{{\mathscr D}^{{\alpha _i}}}{x_i} =& \hspace{-8pt} {d_i}{x_{i + 1}} + {\psi _i}({x_1}, x_2 , \cdots , {x_i}) \\
& \hspace{-8pt} + \varphi _i^{\rm{T}}({x_1}, x_2, \cdots , {x_i})\theta , ~i = 1, \cdots ,n - 1,\\
{{\mathscr D}^{{\alpha _n}}}{x_n} =& \hspace{-8pt} bu(v) + {\psi _n}(x) + \varphi _n^{\rm{T}}(x)\theta  + d,\\
y =& \hspace{-8pt} {x_1},
\end{array} \right.
\end{eqnarray}
where $0<\alpha_i<1 ~(i=1, \cdots ,n)$ are the system incommensurate fractional order, $\theta  \in {{\mathbb R}^q}$ is a constant and unknown vector, $b$ is the input coefficient, $d_i ~(i=1, \cdots ,n-1)$ are known nonzero real constants, ${\psi _i}\left(  \cdot  \right) \in {\mathbb R}$ and ${\varphi _i}\left(  \cdot  \right) \in {{\mathbb R}^q}$ ~($i = 1, \cdots ,n)$ are known nonlinear functions, $x = {\left[ {{x_1} ~\cdots~ {x_n}} \right]^{\rm{T}}} \in {{\mathbb R}^n}$ is the pseudo system state vector, and $d$ is the time-varying disturbance within unknown bound $|d| < D$. $u(v) \in {\mathbb R}$ represents the actual control input restrained by dead zone as well as saturation, and $v \in {\mathbb R}$ is the desired control input, namely,
\begin{eqnarray}\label{Eq2-2}
u = \left\{ \begin{array}{cl}
{U_{up}} &, ~v(t) \ge {w_{M}},\\
m[v(t) - {b_r}] &, ~{b_r} \le v(t) < {w_{M}},\\
0 &, ~{b_l} < v(t) < {b_r},\\
m[v(t) - {b_l}] &, ~{w_{m}} < v(t) \le {b_l},\\
{U_{low}} &, ~v(t) \le {w_{m}},
\end{array} \right.
\end{eqnarray}
with ${w_{M}} = \frac{{{U_{up}}}}{m} + {b_r}$ and ${w_{m}} = \frac{{{U_{low}}}}{m} + {b_l}$. The slope $m$ is possibly known, but the parameters of such nonsmooth nonlinear input, $U_{up}, ~U_{low}, ~b_l, ~b_r$, are entirely unknown. If slope $m$ is unknown, all parameters of dead zone and saturation are unknown as well, which leads to a big trouble that we will study later. To show the relationship between $u$ and $v$ visually, the phase diagram is drawn in Fig \ref{Fig 1}.
\begin{figure}[htbp]
  \centering
  \includegraphics[width=0.25\textwidth]{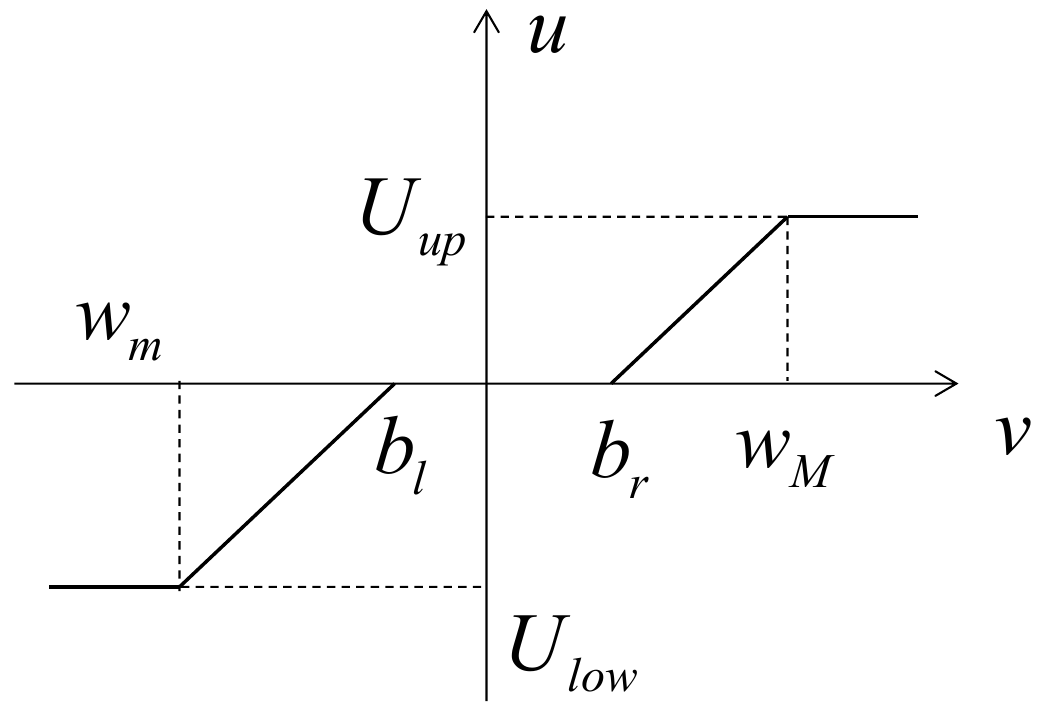}
  \caption{Control input subject to dead zone and saturation.}\label{Fig 1}
\end{figure}

The objective of this study is to develop an effective approach to enable the output $y$ to track the reference signal $r$ asymptotically under the following assumptions.

\noindent\textbf{Assumption 1.} \textit{The sign of input coefficient ${\mathop{\rm sgn}} (b)$ is known.}

\noindent\textbf{Assumption 2.} \textit{The reference signal $r$ and its first $\left\lceil {\sum\nolimits_{i = 1}^j {{\alpha _i}} } \right\rceil $-th order derivatives are piecewise continuous and bounded, $j=1, \cdots, n$.}

There are three widely accepted definitions of fractional order derivative ${{\mathscr D}^{{\alpha}}}$, but the Caputo's one is chosen for subsequent use due to some good  properties, namely its derivative of constant equals zero and there's no need to calculate fractional order derivative of initial value when operating Laplace transformation. The Caputo's definition is
\begin{eqnarray}\label{Eq2-3}
\textstyle
{}_c^{}{\mathscr D}_t^\alpha f\left( t \right) = \frac{1}{{\Gamma \left( {m - \alpha } \right)}}\int_c^t {\frac{{{f^{\left( m \right)}}\left( \tau  \right)}}{{{{\left( {t - \tau } \right)}^{\alpha  - m + 1}}}}{\rm{d}}\tau },
\end{eqnarray}
where $m-1< \alpha<m, m\in \mathbb{N}^+$, $\Gamma \left( \alpha  \right) = \int_0^\infty  {{x^{\alpha  - 1}}{{\rm e}^{ - x}}{\rm{d}}x} $ is the Gamma function.

On the basis of Caputo's definition, the next additive law of exponents holds on condition that $f\left( c \right)=0$ and $0<p,q<1$
\begin{eqnarray}\label{Eq2-4}
_c^{}{\mathscr D}_t^p\left( {_c^{}{\mathscr D}_t^qf\left( t \right)} \right) ={_c^{}{\mathscr D}_t^q\left( {_c^{}{\mathscr D}_t^pf\left( t \right)} \right)} = {}_c^{}{\mathscr D}_t^{p + q}f\left( t \right).
\end{eqnarray}

For purpose of convenient expression, the $\alpha$  order derivative from initial time zero ${}_0^{}{\mathscr D}_t^\alpha $ is simplified as ${\mathscr D}^\alpha $. Just before uncovering the main results, some helpful lemmas need referring.

\begin{lemma}\label{Lemma 1}
(see \cite{Montseny:1998LAAS}) The differential equation ${{\mathscr D}^{\alpha }}y\left(t \right)=u\left(t \right)$ with fractional order $0<\alpha<1$, ${y\left(t \right) \in \mathbb{R}}$ and ${u\left(t \right) \in \mathbb{R}}$ can be transformed into the following linear continuous frequency distributed model
\begin{eqnarray}\label{Eq2-5}
\left\{ {\begin{array}{rl}
\frac{{\partial z\left( {\omega ,t} \right)}}{{\partial t}} = & \hspace{-8pt} - \omega z\left( {\omega ,t} \right) + u\left( t \right),\\
y\left( t \right) = & \hspace{-8pt} \int_0^\infty  {{\mu _\alpha }\left( \omega  \right)z\left( {\omega ,t} \right){\rm{d}}\omega },
\end{array}} \right.
\end{eqnarray}
where ${{\mu _\alpha }\left( \omega  \right) = {{{\omega ^{ - \alpha }}\sin \left( {\alpha \pi } \right)} \mathord{\left/
 {\vphantom {{{\omega ^{ - \alpha }}\sin \left( {\alpha \pi } \right)} \pi }} \right.
 \kern-\nulldelimiterspace} \pi }}$ and $z\left( {\omega ,t} \right) \in \mathbb{R} $ is the true state of the system.
\end{lemma}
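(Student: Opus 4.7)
The plan is to verify Lemma 1 by solving the proposed distributed system explicitly and checking that the resulting $y(t)$ agrees with the Caputo fractional integral of $u(t)$, which by definition inverts $\mathscr{D}^\alpha$ on functions with zero initial value.

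First I would treat the PDE $\partial_t z(\omega,t)=-\omega z(\omega,t)+u(t)$ as a one-parameter family of scalar linear ODEs indexed by $\omega>0$. Taking the natural initial condition $z(\omega,0)=0$ (consistent with a system starting from rest), the variation-of-parameters formula gives the explicit solution $z(\omega,t)=\int_0^t e^{-\omega(t-\tau)}u(\tau)\,\mathrm{d}\tau$. Substituting into the output equation and interchanging the order of integration (by Fubini, assuming enough regularity on $u$), I would obtain
\begin{equation*}
y(t)=\int_0^t u(\tau)\left[\int_0^\infty \mu_\alpha(\omega)\,e^{-\omega(t-\tau)}\,\mathrm{d}\omega\right]\mathrm{d}\tau.
\end{equation*}

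Next I would evaluate the inner integral, which is precisely the Laplace transform of $\mu_\alpha(\omega)=\omega^{-\alpha}\sin(\alpha\pi)/\pi$ evaluated at $s=t-\tau$. Using the classical identity $\int_0^\infty \omega^{-\alpha}e^{-\omega s}\,\mathrm{d}\omega=\Gamma(1-\alpha)s^{\alpha-1}$ together with Euler's reflection formula $\Gamma(\alpha)\Gamma(1-\alpha)=\pi/\sin(\alpha\pi)$, the kernel collapses to $(t-\tau)^{\alpha-1}/\Gamma(\alpha)$. This yields
\begin{equation*}
y(t)=\frac{1}{\Gamma(\alpha)}\int_0^t (t-\tau)^{\alpha-1}\,u(\tau)\,\mathrm{d}\tau,
\end{equation*}
which is exactly $\mathscr{I}^\alpha u(t)$, the Riemann–Liouville fractional integral of order $\alpha$.

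Finally, I would apply $\mathscr{D}^\alpha$ to both sides; since Caputo's derivative satisfies $\mathscr{D}^\alpha\mathscr{I}^\alpha u=u$ for $0<\alpha<1$ and $y(0)=0$, this reproduces the original equation $\mathscr{D}^\alpha y(t)=u(t)$, closing the equivalence. The main technical obstacle is the reduction of the $\omega$-integral: one needs to recognize it as a Mellin–Laplace pairing and invoke the reflection formula, and one must also justify the Fubini interchange (the $\omega^{-\alpha}$ singularity at zero is integrable because $\alpha<1$, and the exponential factor controls behavior at infinity), so existence of the distributed state $z(\omega,\cdot)$ in a suitable weighted $L^1$ sense is implicit. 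Once these analytic points are handled, the algebraic manipulation through Euler's reflection formula is the key step that makes the identity exact.
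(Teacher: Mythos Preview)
Your argument is correct and complete: solving the distributed ODEs, applying Fubini, and collapsing the $\omega$-integral via $\int_0^\infty \omega^{-\alpha}e^{-\omega s}\,\mathrm{d}\omega=\Gamma(1-\alpha)s^{\alpha-1}$ together with Euler's reflection formula is exactly the standard route to the identity $y=\mathscr{I}^\alpha u$. The paper, however, offers no proof of Lemma~1 at all---it is simply quoted from the cited reference (Montseny, 1998) and used as a black box, so there is nothing in the paper to compare your derivation against; your proposal supplies a self-contained justification that the paper omits.
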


\begin{remark}\label{Remark 1} The Lemma 1 implies the infinite dimension of fractional order state. And this lemma derives from zero initial condition, while the system response by definition of Riemann-Liouville or Caputo corresponds to the response of frequency distributed model with specific initial value \cite{Trigeassou:2012SIVP}. With the help of equivalent frequency distributed model, the stability could be analyzed via Lyapunov technique where true sate replaces pseudo sate \cite{Trigeassou:2013CMA}.
\end{remark}

\begin{lemma}\label{Lemma 2}
(see \cite{Wei:2015JOTA}) Supposing $r_1$ is the allowed maximum of control input $u(t)$ and $v(t)$ is the input signal to be differentiated with $\left| {{{\mathscr D}^\alpha }v\left( t \right)} \right| < \infty $, the following fractional order tracking differentiator (FOTD)
\begin{eqnarray}\label{Eq2-6}
\left\{ \begin{array}{l}
{{\mathscr D}^\alpha }{x_1}\left( t \right) = {x_2}\left( t \right),\\
{{\mathscr D}^\alpha }{x_2}\left( t \right) = u\left( t \right),
\end{array} \right.
\end{eqnarray}
is convergent in the sense of ${x_1}(t)$ uniformly converging to $v(t)$ on $[0,\infty )$ as ${r_1} \to \infty $, and ${x_2}(t)$ is the general $\alpha $-th order differentiation of $v(t)$, where the control input $u\left( t \right) =  - {r_1}{\rm{tanh}}( {{x_1}\left( t \right) - v\left( t \right) + \frac{{{x_2}\left( t \right)\left| {{x_2}\left( t \right)} \right|}}{{{r_1}}}f\left( \alpha  \right),{r_2}} )$, ${r_2} > 0$, $f\left( \alpha  \right) = 1 - \frac{{{\Gamma ^2}\left( {\alpha  + 1} \right)}}{{\Gamma \left( {2\alpha  + 1} \right)}}$ and $\tanh \left( {z,\gamma } \right) = \frac{{{{\rm e}^{\gamma z}} - {{\rm e}^{ - \gamma z}}}}{{{{\rm e}^{\gamma z}} + {{\rm e}^{ - \gamma z}}}}$.
\end{lemma}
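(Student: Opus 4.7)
The plan is to adapt the classical time-optimal tracking-differentiator analysis from the integer-order case $\alpha=1$, where the switching curve $e_1 + \frac{x_2|x_2|}{2r_1}=0$ gives the familiar coefficient $\tfrac12$, and to replace $\tfrac12$ by the fractional analogue $f(\alpha)=1-\Gamma^2(\alpha+1)/\Gamma(2\alpha+1)$. As an immediate sanity check, $f(1)=\tfrac12$, so the proposed tracker reduces to Han's classical differentiator in the integer-order limit. Introducing the tracking error $e_1(t)=x_1(t)-v(t)$ and using the first equation of \eqref{Eq2-6}, one obtains $\mathscr{D}^\alpha e_1 = x_2-\mathscr{D}^\alpha v$, while the argument of $\tanh$ in $u$ is the switching-surface variable $\sigma = e_1 + \frac{x_2|x_2|}{r_1} f(\alpha)$. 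The specific choice of $f(\alpha)$ is motivated by the fact that, along a constant-saturating-control arc $\mathscr{D}^\alpha x_2 = \pm r_1$, the fractional power-function integration formula yields exactly this constant, so that $\sigma$ is the natural invariant of the time-optimal switching curve for the fractional double integrator.

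I would then invoke Lemma~\ref{Lemma 1} to replace each Caputo equation in \eqref{Eq2-6} by its equivalent linear frequency-distributed representation on true states $z_1(\omega,t)$ and $z_2(\omega,t)$. On this infinite-dimensional system the Lyapunov functional
\begin{equation*}
V(t) = \tfrac12 \int_0^\infty \mu_\alpha(\omega)\bigl[z_1^2(\omega,t)+z_2^2(\omega,t)\bigr]\mathrm{d}\omega + \tfrac12 \sigma^2(t)
\end{equation*}
admits a genuine time-derivative. The internal term $-\int_0^\infty \omega \mu_\alpha(\omega)(z_1^2+z_2^2)\,\mathrm{d}\omega$ is non-positive, and the $\sigma\dot\sigma$ contribution, once the $f(\alpha)$-weighted switching-curve choice cancels the leading $\mathscr{D}^\alpha v$ piece, reduces to $-r_1\sigma\tanh(\sigma,r_2)+\varepsilon(t)$ with $|\varepsilon(t)|$ controlled by $\sup_t |\mathscr{D}^\alpha v(t)|$ but independent of $r_1$. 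Since $-r_1 s\tanh(s,r_2)\le -r_1|s|+O(1/r_2)$, we conclude that $\sigma$, and hence the pair $(e_1,x_2)$, is ultimately bounded in a set of size $O(1/r_1)$.

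From $\sup_{t\ge 0}|\sigma(t)|=O(1/r_1)$ the identity $e_1=\sigma-\frac{x_2|x_2|}{r_1}f(\alpha)$, together with the a priori bound on $x_2$ inherited from the Lyapunov inequality, forces $\sup_{t\ge 0}|e_1(t)|\to 0$ as $r_1\to\infty$, which is the claimed uniform convergence of $x_1$ to $v$ on $[0,\infty)$. The second assertion---that $x_2$ is the generalized $\alpha$-th derivative of $v$---then follows by applying $\mathscr{D}^\alpha$ to the first equation of \eqref{Eq2-6}: since $\mathscr{D}^\alpha x_1 = x_2$ and $x_1$ converges uniformly to $v$, the signal $x_2$ is a causal, stable, low-pass realisation of $\mathscr{D}^\alpha v$, and coincides with it wherever the Caputo derivative exists classically.

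The principal obstacle I expect is that $\sigma$ is non-smooth in $x_2$ (the quantity $x_2|x_2|$ is only $C^1$), so $\mathscr{D}^\alpha\sigma$ does not factor cleanly through a fractional Leibniz rule. The remedy is to evaluate $\mathscr{D}^\alpha\sigma$ arc-by-arc along the two constant-control regimes $u=\pm r_1$ (on each of which the fractional power-function formula is explicit) and then patch the estimates at the switching instants, which is precisely where the time-optimal interpretation, and therefore the generalisation from $\tfrac12$ to $f(\alpha)$, does the heavy lifting.
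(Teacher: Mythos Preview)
The paper does not prove Lemma~\ref{Lemma 2}: it is quoted verbatim from \cite{Wei:2015JOTA} and used only as a black-box device (see Remark~\ref{Remark 2} and the simulation section) to obtain the fractional derivatives ${\mathscr D}^{\alpha_i}\tau_{i-1}$ and ${\mathscr D}^{\sum_j \alpha_j}r$ needed in the control laws. There is therefore no in-paper proof against which to compare your proposal.

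On the substance of your sketch, two points deserve attention before you invest further effort. First, your Lyapunov functional carries the true-state energies $\int_0^\infty \mu_\alpha(\omega)z_1^2\,\mathrm{d}\omega$ and $\int_0^\infty \mu_\alpha(\omega)z_2^2\,\mathrm{d}\omega$ associated with $x_1$ and $x_2$ themselves, not with the tracking error $e_1=x_1-v$. The resulting dissipation $-\int_0^\infty \omega\mu_\alpha(\omega)(z_1^2+z_2^2)\,\mathrm{d}\omega$ pushes $x_1,x_2$ toward zero, which is the wrong target when $v$ is a generic bounded signal; the functional should be built on the frequency-distributed states of $e_1$ (and possibly $x_2-\mathscr{D}^\alpha v$), not of $x_1$. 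Second, the step ``the $f(\alpha)$-weighted switching-curve choice cancels the leading $\mathscr{D}^\alpha v$ piece'' is asserted but not shown: $\dot\sigma$ is a classical time derivative of a quantity involving $x_1,x_2$, and neither $\dot e_1$ nor $\dot x_2$ has a closed form in terms of $\mathscr{D}^\alpha v$ via Lemma~\ref{Lemma 1}. The motivation you give for $f(\alpha)$---that it is the constant produced by fractional integration along a saturated arc $\mathscr{D}^\alpha x_2=\pm r_1$---is correct and is indeed the origin of the formula in \cite{Wei:2015JOTA}, but turning that heuristic into a Lyapunov inequality requires more than the patching-at-switching-instants argument you outline, because between switches the input is $-r_1\tanh(\sigma,r_2)$, not $\pm r_1$.
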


\begin{remark}\label{Remark 2}
In fact, the traditional integer order tracking differentiator could be regarded as a special case of the fractional order one \cite{Han:2009TIE}. Therefore, the fractional order derivative of signal will be obtained in time with required speed and smoothness by tuning parameters $r_1$ and $r_2$ of the FOTD  as needed.
\end{remark}

\section{Main Results}\label{Section 3}

A fractional order adaptive backstepping state feedback control method is unfolded in this section where two cases, known and unknown input coefficient, are studied separately. To deduce the main results smoothly, first of all, the controlled plant needs to undergo model transformation.


\subsection{Model transformation}
On the basis of following transformation
\begin{eqnarray}\label{Eq3-1}
\left\{ \begin{array}{l}
{{\bar x}_i} = {\delta _i}{x_i},i =1, \cdots, n,\\
{{\bar \psi }_i}\left( {{x_1}, \cdots ,{x_i}} \right) = {\delta _i}{\psi _i}\left( {{x_1}, \cdots ,{x_i}} \right),i =1, \cdots, n,\\
{{\bar \varphi }_i}\left( {{x_1}, \cdots ,{x_i}} \right) = {\delta _i}{\varphi _i}\left( {{x_1}, \cdots ,{x_i}} \right),i =1, \cdots, n,\\
{b'} = {\delta _n}b,\\
{d'} = {\delta _n}d,
\end{array} \right.
\end{eqnarray}
with ${\delta _1} = 1,~{\delta _j} = \prod\nolimits_{i = 1}^{j - 1} {d_i}~(j = 2, \cdots ,n) $. The controlled plant (\ref{Eq2-1}) will be changed into the system below
\begin{eqnarray}\label{Eq3-2}
\left\{ \begin{array}{rl}
{{\mathscr D}^{{\alpha _i}}}{{\bar x}_i} =& \hspace{-8pt} {{\bar x}_{i + 1}} + {{\bar \psi }_i}({x_1}, \cdots ,{x_i}) \\
& \hspace{-8pt} + \bar \varphi _i^{\rm T}({x_1}, \cdots ,{x_i})\theta , ~i = 1, \cdots ,n - 1,\\
{{\mathscr D}^{{\alpha _n}}}{{\bar x}_n} =& \hspace{-8pt} {b'}u(v) + {{\bar \psi }_n}(x) + \bar \varphi _n^{\rm T}(x)\theta + {d'},\\
y =& \hspace{-8pt} {{\bar x}_1},
\end{array} \right.
\end{eqnarray}
which is known as the normalized fractional order chain system.

\begin{remark}\label{Remark 3}
Considering the backstepping procedure, the general lower triangular FOS (\ref{Eq2-1}) needs converting into the normalized fractional order chain system (\ref{Eq3-2}) so that the FOABC method could be adopted. When the coefficients $d_i=1~(~i=1, \cdots, n-1~)$, the controlled plant (\ref{Eq2-1}) reduces to the system (\ref{Eq3-2}) correspondingly.
\end{remark}


\subsection{Input decomposition}
In order to deal with input saturation and dead zone, an intermediate variable $w$ is introduced between actual control input $u$ and desired control input $v$, namely, $u(v)=u[w(v)]$. Let $u(w)$ and $w(v)$ represent the dead zone nonlinearity and saturation nonlinearity, respectively, whereupon the previous control input restraint shown in Fig \ref{Fig 1} is projected onto two directions as illustrated in Fig \ref{Fig 2} where upper left and lower right represent input saturation and dead zone, respectively. It is obvious that though an intermediary is introduced, after the input $v$ mapped to $w$ and $w$ mapped to $u$, the final relationship between $v$ and $u$ has not changed. Therefore, the problem of control input restrained by input saturation and dead zone could be solved separately as shown in Fig \ref{Fig 3}.
\begin{figure}[htbp]
  \centering
  \includegraphics[width=0.45\textwidth]{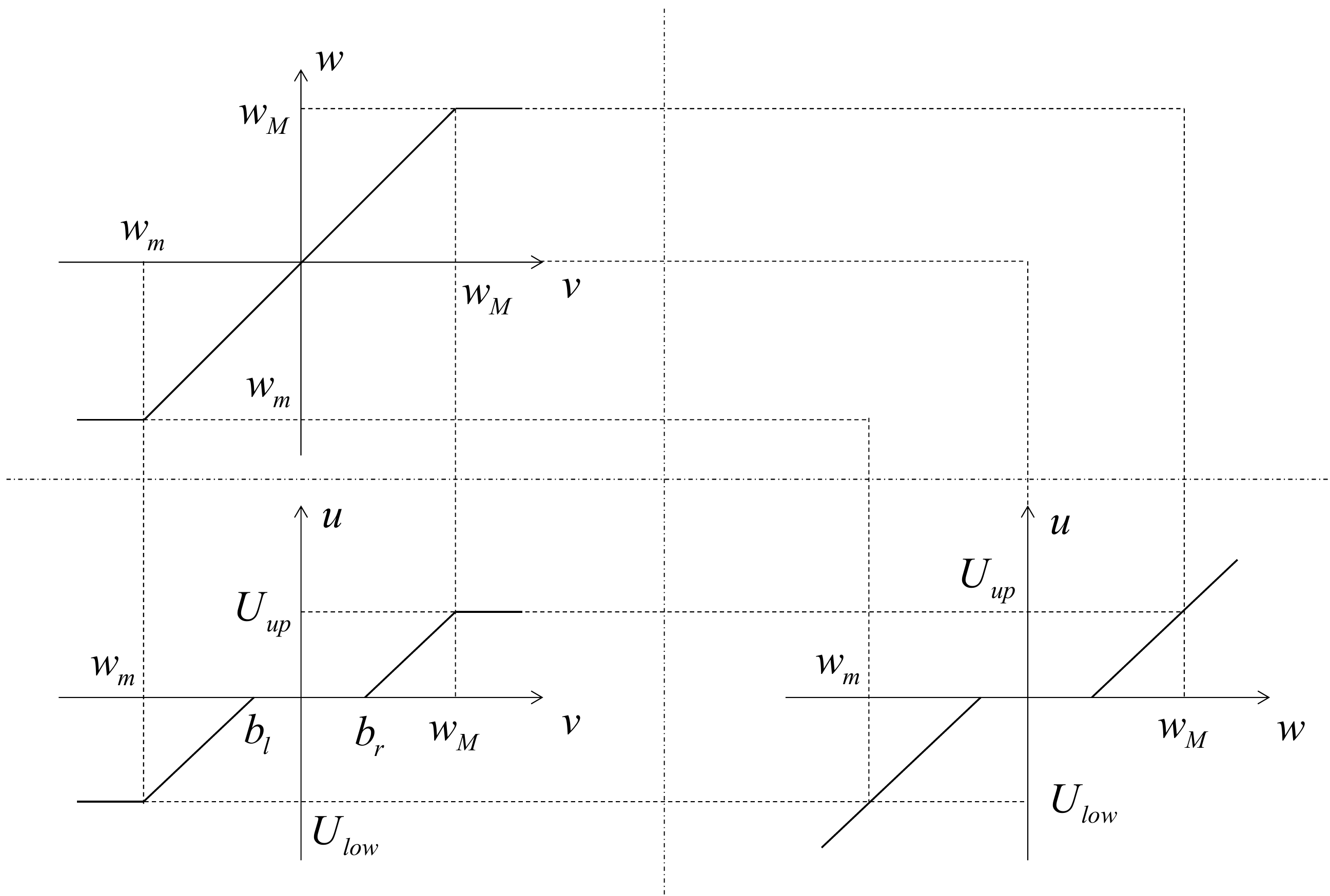}
  \caption{The decomposition of control input.}\label{Fig 2}
\end{figure}
\begin{figure}[htbp]
  \centering
  \includegraphics[width=0.3\textwidth]{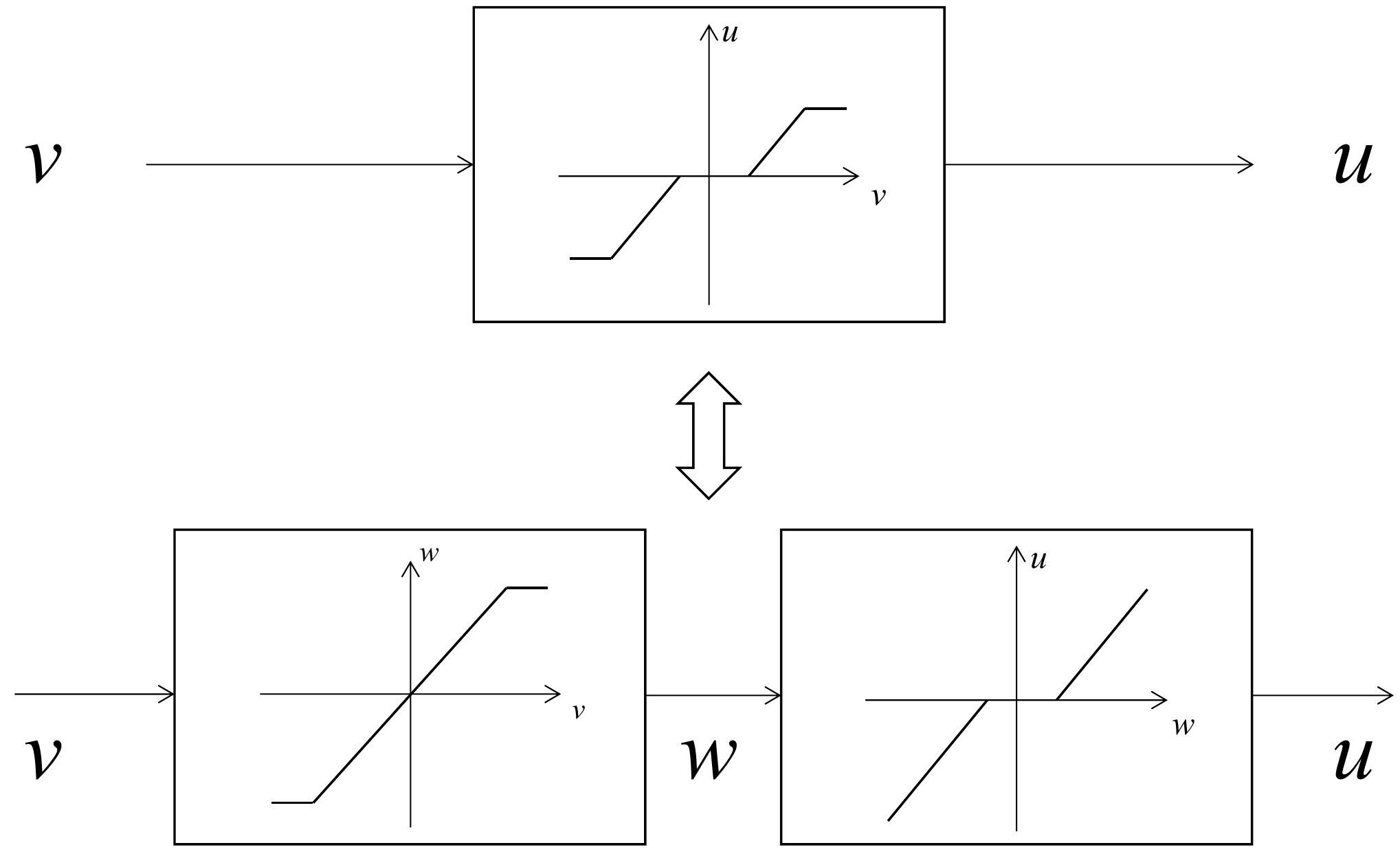}
  \caption{The intermediary $w$ between $v$ and $u$.}\label{Fig 3}
\end{figure}

The expression in (\ref{Eq2-2}) will be taken apart as follows
\begin{eqnarray}\label{Eq3-3}
w(v) = \left\{ \begin{array}{cl}
{w_M} &, ~v(t) \ge {w_M}\\
v(t) &, ~{w_m} \le v(t) < {w_M}\\
{w_m} &, ~v(t) < {w_m}
\end{array} \right.
\end{eqnarray}
\begin{eqnarray}\label{Eq3-4}
u(w) = \left\{ \begin{array}{cl}
m[w(v) - {b_r}] &, ~w(v) \ge {b_r}\\
0 &, ~{b_l} \le w(v) < {b_r}\\
m[w(v) - {b_l}] &, ~w(v) < {b_l}
\end{array} \right.
\end{eqnarray}

The dead zone part (\ref{Eq3-4}) could be rewritten as
\begin{eqnarray}\label{Eq3-5}
u(w) = mw(v) + d''(w),
\end{eqnarray}
where $d''(w)$ is a bounded term
\begin{eqnarray}\label{Eq3-6}
d''(w) = \left\{ \begin{array}{cl}
 - m{b_r} &, ~w \ge {b_r}\\
 - mw(v) &, ~{b_l} \le w < {b_r}\\
 - m{b_l} &, ~w < {b_l}
\end{array} \right.
\end{eqnarray}

Then the last state equation of (\ref{Eq3-2}) changes correspondingly
\begin{eqnarray}\label{Eq3-7}
\begin{array}{rl}
{{\mathscr D}^{{\alpha _n}}}{{\bar x}_n} =& \hspace{-8pt} b'u(v) + {{\bar \psi }_n}(x) + \bar \varphi _n^{\rm{T}}(x)\theta  + d'\\
 =& \hspace{-8pt} b'[ mw(v) + d''(w)]  + {{\bar \psi }_n}(x) + \bar \varphi _n^{\rm{T}}(x)\theta  + d'\\
 =& \hspace{-8pt} \bar bw(v) + {{\bar \psi }_n}(x) + \bar \varphi _n^{\rm{T}}(x)\theta  + \bar d,
\end{array}
\end{eqnarray}
where $\bar b = b'm$, and $\bar d = d' + b'd''(w)$ is a disturbance-like term within unknown bound $\left| {\bar d} \right| \le \bar D$. Finally, the system (\ref{Eq3-2}) will transform to
\begin{eqnarray}\label{Eq3-8}
\left\{ \begin{array}{rl}
{{\mathscr D}^{{\alpha _i}}}{{\bar x}_i} =& \hspace{-8pt} {{\bar x}_{i + 1}} + {{\bar \psi }_i}({x_1}, \cdots ,{x_i})   \\
& \hspace{-8pt} + \bar \varphi _i^{\rm T}({x_1}, \cdots ,{x_i})\theta , ~i = 1, \cdots ,n - 1,\\
{{\mathscr D}^{{\alpha _n}}}{{\bar x}_n} =& \hspace{-8pt} {\bar b}w(v) + {{\bar \psi }_n}(x) + \bar \varphi _n^{\rm T}(x)\theta + {\bar d},\\
y =& \hspace{-8pt} {{\bar x}_1}.
\end{array} \right.
\end{eqnarray}

The problem of dead zone and input saturation is transformed to the problem of input saturation and bounded disturbance at last.

It is noted that when either coefficient $b$ or slope $m$ is unknown, the parameter $\bar b$ will be unknown. In other words, the parameter $\bar b$ must be known under condition of known $b$ and $m$. In order to solve problem completely, both two cases will be studied afterwards.


\subsection{FOABC with known $\bar b$}
For the purpose of compensating saturation, a fractional order auxiliary system is designed to generate virtual signals $\lambda  = {[~{\lambda _1}, {\lambda _2}, \cdots , {\lambda _n}~]^{\rm T}}$ in the first place
\begin{eqnarray}\label{Eq3-9}
\left\{ \begin{array}{rl}
{{\mathscr D}^{{\alpha _i}}}{\lambda _i} =& \hspace{-8pt} {\lambda _{i + 1}} - {c_i}{\lambda _i},i = 1, \cdots ,n - 1,\\
{{\mathscr D}^{{\alpha _n}}}{\lambda _n} =& \hspace{-8pt} \bar b\Delta w - {c_n}{\lambda _n},
\end{array} \right.
\end{eqnarray}
where $\Delta w = w - v$, $c_i>1$ ($~i=2,3,\cdots,n-1$) and $c_1,~c_n>0.5$.

\begin{theorem}\label{Theorem 1}
Considering the plant (\ref{Eq2-1}) with known $\bar b$, there is a control method that consists of

\noindent the error variables
\begin{eqnarray}\label{Eq3-10}
\left\{ \begin{array}{l}
{\varepsilon _1} = {{\bar x}_1} - r - {\lambda _1},\\
{\varepsilon _i} = {{\bar x}_i} - {{\mathscr D}^{\sum\limits_{j = 1}^{i - 1} {{\alpha _j}} }}r - {\tau _{i - 1}} - {\lambda _i}, ~i = 2, \cdots ,n,
\end{array} \right.
\end{eqnarray}
the stabilizing functions
\begin{eqnarray}\label{Eq3-11}
\left\{ \begin{array}{rl}
{\tau _1} =& \hspace{-8pt} - {c_1}({{\bar x}_1} - r) - {{\bar \psi }_1} - \bar \varphi _1^{\rm T}\hat \theta, \\
{\tau _i} =& \hspace{-8pt} - {c_i}({{\bar x}_i} - {{\mathscr D}^{\sum\limits_{j = 1}^{i - 1} {{\alpha _j}} }}r - {\tau _{i - 1}}) + {{\mathscr D}^{{\alpha _i}}}{\tau _{i - 1}}   \\
& \hspace{-8pt} - {{\bar \psi }_i} - \bar \varphi _i^{\rm T}\hat \theta , ~i = 2, \cdots ,n - 1,
\end{array} \right.
\end{eqnarray}
the parameter update law
\begin{eqnarray}\label{Eq3-12}
\left\{ \begin{array}{rl}
{{\mathscr D}^\beta }\hat \theta  =& \hspace{-8pt} \Lambda \sum\limits_{j = 1}^n {{\varepsilon _j}{{\bar \varphi }_j}}, \\
{{\mathscr D}^\rho }\hat D =& \hspace{-8pt} \xi \left| {{\varepsilon _n}} \right|,
\end{array} \right.
\end{eqnarray}
the adaptive control law
\begin{eqnarray}\label{Eq3-13}
\begin{array}{rl}
v =& \hspace{-8pt} \frac{1}{{\bar b}}[ - {c_n}{\varepsilon _n} + {{\mathscr D}^{\sum\limits_{j = 1}^n {{\alpha _j}} }}r + {{\mathscr D}^{{\alpha _n}}}{\tau _{n - 1}} - {{\bar \psi }_n} - \bar \varphi _n^{\rm{T}}\hat \theta  \\
& \hspace{-8pt} - {\mathop{\rm sgn}} ({\varepsilon _n})\hat D - {c_n}{\lambda _n}].
\end{array}
\end{eqnarray}
Then all the signals in the closed-loop adaptive system are globally uniformly bounded, and the asymptotic tracking is achieved as
\begin{eqnarray}\label{Eq3-14}
\mathop {\lim }\limits_{t \to \infty } [y(t) - r(t)] = 0,
\end{eqnarray}
where $\hat \theta$ is the parameter estimate of $\theta$, $\hat D$ is the parameter estimate of $\bar D$, $\Lambda  \in {{\mathbb R}^{q \times q}}$ is a positive definite matrix, $0 < \beta, ~\rho  < 1$ and $\xi  >0$.
\end{theorem}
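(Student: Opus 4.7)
The plan is to derive a clean error cascade, rewrite the errors via Lemma~\ref{Lemma 1} in their true (infinite dimensional) states, build a frequency-distributed Lyapunov functional, and close with a Barbalat-type argument.

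First, I would compute ${\mathscr D}^{\alpha_i}\varepsilon_i$ by differentiating (\ref{Eq3-10}) and substituting the chain dynamics (\ref{Eq3-8}), the auxiliary dynamics (\ref{Eq3-9}), the stabilizing functions (\ref{Eq3-11}), and the control law (\ref{Eq3-13}) in turn, invoking the additive law (\ref{Eq2-4}) to combine fractional derivatives of the reference. The virtual signal $\lambda_i$ is engineered so that its $-c_i\lambda_i$ drift cancels the $+c_i\lambda_i$ that the stabilizing functions inject into the tracking error, and so that $\bar b\Delta w$ in ${\mathscr D}^{\alpha_n}\lambda_n$ absorbs the mismatch between $w(v)$ and $v$. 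The intended cascade is
\begin{eqnarray*}
{\mathscr D}^{\alpha_i}\varepsilon_i &=& -c_i\varepsilon_i+\varepsilon_{i+1}+\bar\varphi_i^{\rm T}\tilde\theta,\quad i=1,\dots,n-1,\\
{\mathscr D}^{\alpha_n}\varepsilon_n &=& -c_n\varepsilon_n+\bar\varphi_n^{\rm T}\tilde\theta+\bar d-{\rm sgn}(\varepsilon_n)\hat D,
\end{eqnarray*}
with $\tilde\theta=\theta-\hat\theta$.

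Second, by Lemma~\ref{Lemma 1} each $\varepsilon_i$ admits $\varepsilon_i(t)=\int_0^\infty\mu_{\alpha_i}(\omega)z_i(\omega,t)\,{\rm d}\omega$ with $\partial_t z_i=-\omega z_i+u_i$, and analogous representations hold for $\tilde\theta$ and $\tilde D=\bar D-\hat D$ driven by (\ref{Eq3-12}) at orders $\beta$ and $\rho$. I would use the candidate
\begin{eqnarray*}
V=\sum_{i=1}^n\tfrac{1}{2}\int_0^\infty\mu_{\alpha_i}z_i^2\,{\rm d}\omega+\tfrac{1}{2}\int_0^\infty\mu_\beta z_\theta^{\rm T}\Lambda^{-1}z_\theta\,{\rm d}\omega+\tfrac{1}{2\xi}\int_0^\infty\mu_\rho z_D^2\,{\rm d}\omega.
\end{eqnarray*}
Differentiating along trajectories, the $-\omega$ drifts contribute a nonpositive frequency integral $-W(t)\le 0$; the update law (\ref{Eq3-12}) is crafted so that the cross terms $\sum_j\varepsilon_j\bar\varphi_j^{\rm T}\tilde\theta$ and $|\varepsilon_n|\tilde D$ cancel exactly after bounding $\varepsilon_n\bar d\le|\varepsilon_n|\bar D$. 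What remains is $\dot V\le-\sum_{i=1}^n c_i\varepsilon_i^2+\sum_{i=1}^{n-1}\varepsilon_i\varepsilon_{i+1}-W(t)$, and Young's inequality together with the choices $c_1,c_n>1/2$ and $c_i>1$ for $2\le i\le n-1$ from (\ref{Eq3-9}) collapses this to $\dot V\le-\sum_i\sigma_i\varepsilon_i^2\le 0$ with $\sigma_i>0$.

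Third, $\dot V\le 0$ yields $\varepsilon_i,\tilde\theta,\tilde D\in L^\infty$ and, integrating, $\varepsilon_i\in L^2$. Boundedness of $\hat\theta$ and $\hat D$ propagates to the stabilizing functions; using the FOTD of Lemma~\ref{Lemma 2} to furnish ${\mathscr D}^{\alpha_n}\tau_{n-1}$, the desired control $v$ is bounded, hence so are $w$, $\Delta w$ and the auxiliary state $\lambda$ via (\ref{Eq3-9}). With all signals bounded, ${\mathscr D}^{\alpha_i}\varepsilon_i$ is bounded, so a fractional-order Barbalat argument yields $\varepsilon_i\to 0$. Since $y-r=\varepsilon_1+\lambda_1$, asymptotic tracking then reduces to $\lambda_1\to 0$, obtained from the stable $\lambda$-cascade once $\Delta w$ dies off in the unsaturated regime. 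The subtlest step, and the one I would expect to demand the most care, is precisely this saturation-compensation argument: $\lambda_i\to 0$ is not automatic from $\dot V\le 0$ because $\lambda$ sits outside the Lyapunov functional, so one must argue separately that the ideal $v$ from (\ref{Eq3-13}) eventually lives inside $[w_m,w_M]$ and that the FOTD provides sufficient smoothness for the fractional derivatives appearing in the backstepping recursion. The remaining cancellation bookkeeping and the frequency-integral manipulations are routine once the error cascade is in hand.
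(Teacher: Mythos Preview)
Your proposal is correct and follows essentially the same route as the paper: the same frequency-distributed Lyapunov functional, the same Young-inequality treatment of the cross terms $\varepsilon_i\varepsilon_{i+1}$ under the gain conditions $c_1,c_n>\tfrac12$, $c_i>1$, and the same cancellation of the adaptation terms via (\ref{Eq3-12}). The paper organizes the computation step-by-step through intermediate functionals $V_1,\dots,V_n$ and closes with LaSalle's invariance principle rather than a Barbalat argument; your flagging of the $\lambda_1\to 0$ issue (needed because $y-r=\varepsilon_1+\lambda_1$) is in fact sharper than the paper's own treatment, which only remarks that $\Delta w$ does not influence the $\varepsilon$-analysis without explicitly arguing that the auxiliary state decays.
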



\begin{proof}
Because of the uncertain $\theta$, the control method must perform the function of controller as well as estimator. If the ${\hat \theta }$ is regarded as the estimate of $\theta $, the estimated error $\tilde \theta  = \theta  - \hat \theta $ naturally forms and then the following equation is obtained in view of Caputo's definition
\begin{eqnarray}\label{Eq3-15}
{{\mathscr D}^\beta }\tilde \theta  = {{\mathscr D}^\beta }\theta  - {{\mathscr D}^\beta }\hat \theta  =  - {{\mathscr D}^\beta }\hat \theta,
\end{eqnarray}
with the fractional order of update law $0< \beta < 1$. Based on Lemma \ref{Lemma 1}, (\ref{Eq3-15}) will be transformed into the frequency distributed model
\begin{eqnarray}\label{Eq3-16}
\left\{ \begin{array}{rl}
\frac{{\partial {z_\theta }(\omega ,t)}}{{\partial t}} =& \hspace{-8pt} - \omega {z_\theta }(\omega ,t) - {{\mathscr D}^\beta }\hat \theta ,\\
\tilde \theta  =& \hspace{-8pt} \int_0^\infty  {{\mu _\beta }(\omega ){z_\theta }(\omega ,t){\rm d}\omega },
\end{array} \right.
\end{eqnarray}
with ${z_\theta }(\omega ,t) \in {{\mathbb R}^q}$ and ${\mu _\beta }{\rm{(}}\omega {\rm{) = }}\frac{{\sin (\beta \pi )}}{{{\omega ^\beta }\pi }}$.


\noindent Step 1. Let's begin with the first equation in (\ref{Eq3-10}) by calculating the fractional order derivative and introducing virtual control $\tau_1$
\begin{eqnarray}\label{Eq3-17}
\begin{array}{rl}
{{\mathscr D}^{{\alpha _1}}}{\varepsilon _1} =& \hspace{-8pt}  {{\mathscr D}^{{\alpha _1}}}{{\bar x}_1} - {{\mathscr D}^{{\alpha _1}}}r - {{\mathscr D}^{{\alpha _1}}}{\lambda _1}  \\
 =& \hspace{-8pt}  {{\bar x}_2} + {{\bar \psi }_1} + \bar \varphi _1^{\rm T}\theta  - {{\mathscr D}^{{\alpha _1}}}r - {\lambda _2} + {c_1}{\lambda _1}  \\
 =& \hspace{-8pt} {\varepsilon _2} + {\tau _1} + {{\bar \psi }_1} + \bar \varphi _1^{\rm T}\theta  + {c_1}{\lambda _1}.
\end{array}
\end{eqnarray}

After transformation into the related frequency distributed model, the previous (\ref{Eq3-17}) will be
\begin{eqnarray}\label{Eq3-18}
\left\{ \begin{array}{rl}
\frac{{\partial {z_1}(\omega ,t)}}{{\partial t}} =& \hspace{-8pt}  - \omega {z_1}(\omega ,t) + {\varepsilon _2} + {\tau _1} \\
& \hspace{-8pt} + {{\bar \psi }_1} + \bar \varphi _1^{\rm T}\theta  + {c_1}{\lambda _1},\\
{\varepsilon _1} =& \hspace{-8pt} \int_0^\infty  {{\mu _{{\alpha _1}}}(\omega ){z_1}(\omega ,t){\rm d}\omega },
\end{array} \right.
\end{eqnarray}
with ${\mu _{{\alpha _1}}}{\rm{(}}\omega {\rm{) = }}\frac{{\sin ({\alpha _1}\pi )}}{{{\omega ^{{\alpha _1}}}\pi }}$.

Selecting the Lyapunov function as
\begin{eqnarray}\label{Eq3-19}
\begin{array}{rl}
{V_1} =& \hspace{-8pt} \frac{1}{2}\int_0^\infty  {{\mu _\beta }(\omega )z_\theta ^{\rm T}(\omega ,t){\Lambda ^{ - 1}}{z_\theta }(\omega ,t){\rm d}\omega }  \\
& \hspace{-8pt} + \frac{1}{2}\int_0^\infty  {{\mu _{{\alpha _1}}}(\omega )z_1^2(\omega ,t){\rm d}\omega },
\end{array}
\end{eqnarray}
then its derivative is expressed as
\begin{eqnarray}\label{Eq3-20}
\begin{array}{rl}
{\dot V_1} =& \hspace{-8pt}  - \int_0^\infty  {\omega {\mu _\beta }(\omega )z_\theta ^{\rm T}(\omega ,t){\Lambda ^{ - 1}}{z_\theta }(\omega ,t){\rm d}\omega }   \\
& \hspace{-8pt} - \int_0^\infty  {\omega {\mu _{{\alpha _1}}}(\omega )z_1^2(\omega ,t){\rm d}\omega } - {\tilde \theta ^{\rm T}}{\Lambda ^{ - 1}}{{\mathscr D}^\beta }\hat \theta \\
& \hspace{-8pt}  + {\varepsilon _1}({\varepsilon _2} + {\tau _1} + {\bar \psi _1} + \bar \varphi _1^{\rm T}\theta  + {c_1}{\lambda _1}).
\end{array}
\end{eqnarray}

Designing the stabilizing function ${\tau _1}$ as (\ref{Eq3-11}), the resulting derivative is
\begin{eqnarray}\label{Eq3-21}
\begin{array}{rl}
{{\dot V}_1} =& \hspace{-8pt}  - \int_0^\infty  {\omega {\mu _\beta }(\omega )z_\theta ^{\rm T}(\omega ,t){\Lambda ^{ - 1}}{z_\theta }(\omega ,t){\rm d}\omega } \\
& \hspace{-8pt} - \int_0^\infty  {\omega {\mu _{{\alpha _1}}}(\omega )z_1^2(\omega ,t){\rm d}\omega } \\
& \hspace{-8pt} + {{\tilde \theta }^{\rm T}}({\varepsilon _1}{{\bar \varphi }_1} - {\Lambda ^{ - 1}}{{\mathscr D}^\beta }\hat \theta ) - {c_1}\varepsilon _1^2 + {\varepsilon _1}{\varepsilon _2}\\
\le & \hspace{-8pt}  - \int_0^\infty  {\omega {\mu _\beta }(\omega )z_\theta ^{\rm T}(\omega ,t){\Lambda ^{ - 1}}{z_\theta }(\omega ,t){\rm d}\omega } \\
& \hspace{-8pt} - \int_0^\infty  {\omega {\mu _{{\alpha _1}}}(\omega )z_1^2(\omega ,t){\rm d}\omega } \\
& \hspace{-8pt} +{{\tilde \theta }^{\rm T}}({\varepsilon _1}{{\bar \varphi }_1} - {\Lambda ^{ - 1}}{{\mathscr D}^\beta }\hat \theta ) - {{\bar c}_1}\varepsilon _1^2{\rm{ + }}\frac{1}{2}\varepsilon _2^2,
\end{array}
\end{eqnarray}
where ${\bar c_1} = {c_1} - \frac{1}{2} > 0$.

If ${\varepsilon _2} = 0$ and ${{\mathscr D}^\beta }\hat \theta  = \Lambda {\varepsilon _1}{{\bar \varphi }_1}$, then ${\varepsilon _1}$ and ${\tilde \theta }$ are both asymptotically convergent to zero in accordance with LaSalle invariance principle \cite{LaSalle:1965DEDS}.


\noindent Step $i (i=2,\cdots,n-1).$
We continue to investigate the $i$-th equation of (\ref{Eq3-10}) with the introduction of virtual control variable $\tau_i$, then the differential of $\varepsilon _i$ is
\begin{eqnarray}\label{Eq3-22}
\begin{array}{rl}
{{\mathscr D}^{{\alpha _i}}}{\varepsilon _i} =& \hspace{-8pt} {{\mathscr D}^{{\alpha _i}}}{{\bar x}_i} - {{\mathscr D}^{\sum\limits_{j = 1}^i {{\alpha _j}} }}r - {{\mathscr D}^{{\alpha _i}}}{\tau _{i - 1}} - {{\mathscr D}^{{\alpha _i}}}{\lambda _i}\\
 =& \hspace{-8pt} {{\bar x}_{i + 1}} + {{\bar \psi }_i} + \bar \varphi _i^{\rm T}\theta  - {{\mathscr D}^{\sum\limits_{j = 1}^i {{\alpha _j}} }}r - {{\mathscr D}^{{\alpha _i}}}{\tau _{i - 1}} \\
& \hspace{-8pt}  - {\lambda _{i + 1}} + {c_i}{\lambda _i} \\
 =& \hspace{-8pt} {\varepsilon _{i + 1}} + {\tau _i} + {{\bar \psi }_i} + \bar \varphi _i^{\rm T}\theta  - {{\mathscr D}^{{\alpha _i}}}{\tau _{i - 1}} + {c_i}{\lambda _i}.
\end{array}
\end{eqnarray}

Its frequency distributed model is shown below
\begin{eqnarray}\label{Eq3-23}
\left\{ \begin{array}{rl}
\frac{{\partial {z_i}(\omega ,t)}}{{\partial t}} =& \hspace{-8pt}  - \omega {z_i}(\omega ,t) + {\varepsilon _{i + 1}} + {\tau _i}  + {{\bar \psi }_i} + \bar \varphi _i^{\rm T}\theta  \\
& \hspace{-8pt} - {{\mathscr D}^{{\alpha _i}}}{\tau _{i - 1}} + {c_i}{\lambda _i},\\
{\varepsilon _i} =& \hspace{-8pt} \int_0^\infty  {{\mu _{{\alpha _i}}}(\omega ){z_i}(\omega ,t){\rm d}\omega },
\end{array} \right.
\end{eqnarray}
with ${\mu _{{\alpha _i}}}{\rm{(}}\omega {\rm{) = }}\frac{{\sin ({\alpha _i}\pi )}}{{{\omega ^{{\alpha _i}}}\pi }}$.

This step aims at stabilizing the system (\ref{Eq3-23}) through Lyapunov function below
\begin{eqnarray}\label{Eq3-24}
{V_i} = {V_{i-1}} + \frac{1}{2}\int_0^\infty  {{\mu _{{\alpha _i}}}\left( \omega  \right)z_i^2\left( {\omega ,t} \right){\rm{d}}\omega } .
\end{eqnarray}

By calculating the derivative of $V_i$
\begin{eqnarray}\label{Eq3-25}
\begin{array}{rl}
{{\dot V}_i} =& \hspace{-8pt} {{\dot V}_{i - 1}} - \int_0^\infty  {\omega {\mu _{{\alpha _i}}}(\omega )z_i^2(\omega ,t){\rm d}\omega } \\
& \hspace{-8pt} + {\varepsilon _i}({\varepsilon _{i + 1}} + {\tau _i} + {{\bar \psi }_i} + \bar \varphi _i^{\rm T}\theta  - {{\mathscr D}^{{\alpha _i}}}{\tau _{i - 1}} + {c_i}{\lambda _i})\\
 \le & \hspace{-8pt} - \int_0^\infty  {\omega {\mu _\beta }(\omega )z_\theta ^{\rm T}(\omega ,t){\Lambda ^{ - 1}}{z_\theta }(\omega ,t){\rm d}\omega } \\
& \hspace{-8pt} - \sum\limits_{j = 1}^i {\int_0^\infty  {\omega {\mu _{{\alpha _j}}}(\omega )z_j^2(\omega ,t){\rm d}\omega } } \\
& \hspace{-8pt} + {{\tilde \theta }^{\rm T}}(\sum\limits_{j = 1}^{i - 1} {{\varepsilon _j}{{\bar \varphi }_j}}  - {\Lambda ^{ - 1}}{{\mathscr D}^\beta }\hat \theta ) - \sum\limits_{j = 1}^{i - 1} {{{\bar c}_j}\varepsilon _j^2}  + \frac{1}{2}\varepsilon _i^2 \\
& \hspace{-8pt} + {\varepsilon _i}({\varepsilon _{i + 1}} + {\tau _i} + {{\bar \psi }_i} + \bar \varphi _i^{\rm T}\theta  - {{\mathscr D}^{{\alpha _i}}}{\tau _{i - 1}} + {c_i}{\lambda _i}),
\end{array}
\end{eqnarray}
and designing the stabilizing function $\tau_i$ as (\ref{Eq3-11}), we further infer the derivative $\dot V_i$
\begin{eqnarray}\label{Eq3-26}
\begin{array}{rl}
{{\dot V}_i} \le & \hspace{-8pt}  - \int_0^\infty  {\omega {\mu _\beta }(\omega )z_\theta ^{\rm T}(\omega ,t){\Lambda ^{ - 1}}{z_\theta }(\omega ,t){\rm d}\omega }  \\
& \hspace{-8pt} - \sum\limits_{j = 1}^i {\int_0^\infty  {\omega {\mu _{{\alpha _j}}}(\omega )z_j^2(\omega ,t){\rm d}\omega } } \\
& \hspace{-8pt} + {{\tilde \theta }^{\rm T}}(\sum\limits_{j = 1}^i {{\varepsilon _j}{{\bar \varphi }_j}}  - {\Lambda ^{ - 1}}{{\mathscr D}^\beta }\hat \theta ) - \sum\limits_{j = 1}^{i - 1} {{{\bar c}_j}\varepsilon _j^2}  \\
& \hspace{-8pt} + \frac{1}{2}\varepsilon _i^2  - {c_i}\varepsilon _i^2 + {\varepsilon _i}{\varepsilon _{i + 1}}\\
\le & \hspace{-8pt} - \int_0^\infty  {\omega {\mu _\beta }(\omega )z_\theta ^{\rm T}(\omega ,t){\Lambda ^{ - 1}}{z_\theta }(\omega ,t){\rm d}\omega } \\
& \hspace{-8pt}  - \sum\limits_{j = 1}^i {\int_0^\infty  {\omega {\mu _{{\alpha _j}}}(\omega )z_j^2(\omega ,t){\rm d}\omega } } \\
& \hspace{-8pt} + {{\tilde \theta }^{\rm T}}(\sum\limits_{j = 1}^i {{\varepsilon _j}{{\bar \varphi }_j}}  - {\Lambda ^{ - 1}}{{\mathscr D}^\beta }\hat \theta ) - \sum\limits_{j = 1}^i {{{\bar c}_j}\varepsilon _j^2}  + \frac{1}{2}\varepsilon _{i + 1}^2,
\end{array}
\end{eqnarray}
where ${{\bar c}_i} = {c_i} - 1 > 0$.

Indeed, when $\varepsilon_{i+1}=0$ and ${{\mathscr D}^\beta }\hat \theta  = \Lambda \sum\limits_{j = 1}^i {{\varepsilon _j}{{\bar \varphi }_j}} $, then $\varepsilon_i$ is asymptotically convergent to zero. While $\varepsilon_i=0$, the update law $\Lambda \sum\limits_{j = 1}^i {{\varepsilon _j}{{\bar \varphi }_j}}$ reduces to $\Lambda \sum\limits_{j = 1}^{i{\rm{ - }}1} {{\varepsilon _j}{{\bar \varphi }_j}} $, which returns to step $i-1.$


\noindent Step $n.$
It is not until the last step that the real control input $v$ finally turns up, hence the whole system will be under control as adaptive control law design finishes. Before the last design, however, there is a disturbance-like term with unknown bound $\bar D$ worthy of attention. If the ${\hat D }$ is regarded as the estimate of $\bar D $, the estimated error will be $\tilde D  = \bar D  - \hat D$, and then the following equation is obtained in view of Caputo's definition
\begin{eqnarray}\label{Eq3-27}
{{\mathscr D}^\rho }\tilde D  = {{\mathscr D}^\rho }\bar D  - {{\mathscr D}^\rho }\hat D  =  - {{\mathscr D}^\rho }\hat D,
\end{eqnarray}
with the fractional order of update law $0< \rho < 1$. Based on Lemma \ref{Lemma 1}, (\ref{Eq3-27}) will be transformed into the frequency distributed model
\begin{eqnarray}\label{Eq3-28}
\left\{ \begin{array}{rl}
\frac{{\partial {z_D}(\omega ,t)}}{{\partial t}} =& \hspace{-8pt} - \omega {z_D}(\omega ,t) - {{\mathscr D}^\rho }\hat D,\\
\tilde D =& \hspace{-8pt} \int_0^\infty  {{\mu _\rho }(\omega ){z_D}(\omega ,t){\rm{d}}\omega },
\end{array} \right.
\end{eqnarray}
with ${\mu _\rho }{\rm{(}}\omega {\rm{) = }}\frac{{\sin (\rho \pi )}}{{{\omega ^\rho }\pi }}$.

Similar to the previous steps, the error variable $\varepsilon_n$ will be derived based on the equation (\ref{Eq3-10}), subsequently its differential function comes out
\begin{eqnarray}\label{Eq3-29}
\begin{array}{rl}
{{\mathscr D}^{{\alpha _n}}}{\varepsilon _n} =& \hspace{-8pt} {{\mathscr D}^{{\alpha _n}}}{{\bar x}_n} - {{\mathscr D}^{\sum\limits_{j = 1}^n {{\alpha _j}} }}r - {{\mathscr D}^{{\alpha _n}}}{\tau _{n - 1}} - {{\mathscr D}^{{\alpha _n}}}{\lambda _n}\\
 =& \hspace{-8pt} \bar bw(v) + {{\bar \psi }_n} + \bar \varphi _n^{\rm{T}}\theta  + \bar d - {{\mathscr D}^{\sum\limits_{j = 1}^n {{\alpha _j}} }}r \\
& \hspace{-8pt} - {{\mathscr D}^{{\alpha _n}}}{\tau _{n - 1}} - \bar b\Delta w + {c_n}{\lambda _n}\\
 =& \hspace{-8pt} \bar bv + {{\bar \psi }_n} + \bar \varphi _n^{\rm{T}}\theta  + \bar d - {{\mathscr D}^{\sum\limits_{j = 1}^n {{\alpha _j}} }}r \\
& \hspace{-8pt}  - {{\mathscr D}^{{\alpha _n}}}{\tau _{n - 1}} + {c_n}{\lambda _n}.
\end{array}
\end{eqnarray}

The frequency distributed model is
\begin{eqnarray}\label{Eq3-30}
\left\{ \begin{array}{rl}
\frac{{\partial {z_n}(\omega ,t)}}{{\partial t}} =& \hspace{-8pt} - \omega {z_n}(\omega ,t) + \bar bv + {{\bar \psi }_n} + \bar \varphi _n^{\rm{T}}\theta  \\
& \hspace{-8pt} + \bar d - {{\mathscr D}^{\sum\limits_{j = 1}^n {{\alpha _j}} }}r - {{\mathscr D}^{{\alpha _n}}}{\tau _{n - 1}} + {c_n}{\lambda _n},\\
{\varepsilon _n} =& \hspace{-8pt} \int_0^\infty  {{\mu _{{\alpha _n}}}(\omega ){z_n}(\omega ,t){\rm{d}}\omega },
\end{array} \right.
\end{eqnarray}
with ${\mu _{{\alpha _n}}}{\rm{(}}\omega {\rm{) = }}\frac{{\sin ({\alpha _n}\pi )}}{{{\omega ^{{\alpha _n}}}\pi }}$.

With the purpose of ensuring ${\varepsilon_n} \to 0$ as $t \to \infty$, the last and overall Lyapunov function is decided
\begin{eqnarray}\label{Eq3-31}
\begin{array}{rl}
{V_n} =& \hspace{-8pt} {V_{n - 1}} + \frac{1}{2}\int_0^\infty  {{\mu _{{\alpha _n}}}(\omega )z_n^2(\omega ,t){\rm d}\omega } \\
& \hspace{-8pt} + \frac{1}{{2\xi }}\int_0^\infty  {{\mu _\rho }(\omega )z_D^2(\omega ,t){\rm{d}}\omega }.
\end{array}
\end{eqnarray}

By adopting the inequality we induce in the step $n-1$, the derivative of (\ref{Eq3-31}) is
\begin{eqnarray}\label{Eq3-32}
\begin{array}{rl}
{{\dot V}_n} =& \hspace{-8pt} {{\dot V}_{n - 1}} - \int_0^\infty  {\omega {\mu _{{\alpha _n}}}(\omega )z_n^2(\omega ,t){\rm{d}}\omega } \\
& \hspace{-8pt} - \frac{1}{\xi }\int_0^\infty  {\omega {\mu _\rho }(\omega )z_D^2(\omega ,t){\rm{d}}\omega } + {\varepsilon _n}(\bar bv + {{\bar \psi }_n} \\
& \hspace{-8pt}  + \bar \varphi _n^{\rm{T}}\theta  + \bar d - {{\mathscr D}^{\sum\limits_{j = 1}^n {{\alpha _j}} }}r - {{\mathscr D}^{{\alpha _n}}}{\tau _{n - 1}} + {c_n}{\lambda _n}) \\
& \hspace{-8pt} - \frac{1}{\xi }\tilde D{{\mathscr D}^\rho }\hat D \\
 \le & \hspace{-8pt} - \int_0^\infty  {\omega {\mu _\beta }(\omega )z_\theta ^{\rm{T}}(\omega ,t){\Lambda ^{ - 1}}{z_\theta }(\omega ,t){\rm{d}}\omega } \\
& \hspace{-8pt}   - \sum\limits_{j = 1}^n {\int_0^\infty  {\omega {\mu _{{\alpha _j}}}(\omega )z_j^2(\omega ,t){\rm{d}}\omega } } \\
& \hspace{-8pt} - \frac{1}{\xi }\int_0^\infty  {\omega {\mu _\rho }(\omega )z_D^2(\omega ,t){\rm{d}}\omega } - \sum\limits_{j = 1}^{n - 1} {{{\bar c}_j}\varepsilon _j^2}  + \frac{1}{2}\varepsilon _n^2   \\
& \hspace{-8pt} + {{\tilde \theta }^{\rm{T}}}(\sum\limits_{j = 1}^{n - 1} {{\varepsilon _j}{{\bar \varphi }_j}} - {\Lambda ^{ - 1}}{{\mathscr D}^\beta }\hat \theta ) + {\varepsilon _n}(\bar bv + {{\bar \psi }_n}   \\
& \hspace{-8pt} + \bar \varphi _n^{\rm{T}}\theta + \bar d - {{\mathscr D}^{\sum\limits_{j = 1}^n {{\alpha _j}} }}r   - {{\mathscr D}^{{\alpha _n}}}{\tau _{n - 1}} + {c_n}{\lambda _n}) \\
& \hspace{-8pt} - \frac{1}{\xi }\tilde D{{\mathscr D}^\rho }\hat D.
\end{array}
\end{eqnarray}

Designing the adaptive control law $v$ as (\ref{Eq3-13}), the simplified derivative $\dot V_n$ is
\begin{eqnarray}\label{Eq3-33}
\begin{array}{rl}
{{\dot V}_n} \le & \hspace{-8pt} - \int_0^\infty  {\omega {\mu _\beta }(\omega )z_\theta ^{\rm{T}}(\omega ,t){\Lambda ^{ - 1}}{z_\theta }(\omega ,t){\rm{d}}\omega }  \\
& \hspace{-8pt} - \sum\limits_{j = 1}^n {\int_0^\infty  {\omega {\mu _{{\alpha _j}}}(\omega )z_j^2(\omega ,t){\rm{d}}\omega } } \\
& \hspace{-8pt} - \frac{1}{\xi }\int_0^\infty  {\omega {\mu _\rho }(\omega )z_D^2(\omega ,t){\rm{d}}\omega } - \sum\limits_{j = 1}^{n - 1} {{{\bar c}_j}\varepsilon _j^2}  + \frac{1}{2}\varepsilon _n^2\\
& \hspace{-8pt} + {{\tilde \theta }^{\rm{T}}}(\sum\limits_{j = 1}^n {{\varepsilon _j}{{\bar \varphi }_j}}  - {\Lambda ^{ - 1}}{{\mathscr D}^\beta }\hat \theta ) - {c_n}\varepsilon _n^2 + {\varepsilon _n}\bar d \\
& \hspace{-8pt} - \left| {{\varepsilon _n}} \right|\hat D - \frac{1}{\xi }\tilde D{{\mathscr D}^\rho }\hat D.
\end{array}
\end{eqnarray}

Because of the following inequality
\begin{eqnarray}\label{Eq3-34}
\begin{array}{rl}
{\varepsilon _n}\bar d - \left| {{\varepsilon _n}} \right|\hat D  \le  \left| {{\varepsilon _n}} \right|\bar D - \left| {{\varepsilon _n}} \right|\hat D  = \tilde D\left| {{\varepsilon _n}} \right|,
\end{array}
\end{eqnarray}
and the the parameter update law ${{\mathscr D}^\beta }\hat \theta, ~{{\mathscr D}^\rho }\hat D$ (\ref{Eq3-12}), the $\dot V_n$ is
\begin{eqnarray}\label{Eq3-35}
\begin{array}{rl}
\dot V_n \le & \hspace{-8pt} -\int_0^\infty  {\omega {\mu _\beta }(\omega )z_\theta ^{\rm{T}}(\omega ,t){\Lambda ^{ - 1}}{z_\theta }(\omega ,t){\rm{d}}\omega }  \\
& \hspace{-8pt} - \sum\limits_{j = 1}^n {\int_0^\infty  {\omega {\mu _{{\alpha _j}}}(\omega )z_j^2(\omega ,t){\rm{d}}\omega } } \\
& \hspace{-8pt} - \frac{1}{\xi }\int_0^\infty  {\omega {\mu _\rho }(\omega )z_D^2(\omega ,t){\rm{d}}\omega } - \sum\limits_{j = 1}^n {{{\bar c}_j}\varepsilon _j^2},
\end{array}
\end{eqnarray}
where ${{\bar c}_n} = {c_n} - \frac{1}{2} > 0$.

According to the LaSalle invariant principle, the $z_i(\omega,t)$ are convergent to the zero point, which makes the error variables $\varepsilon_i$ convergent to zero. Moreover, as the input of constructed system (\ref{Eq3-9}), the error $\Delta w$ has nothing to do with error variables $\varepsilon_i$, thus the controller design will not be influenced by $\Delta w$ and the boundedness of estimation will be guaranteed \cite{Lin:2012NODY}.
\end{proof}


In the recursive procedure of controllers design, every coefficient $c_i$ is required to be greater than a certain constant with the purpose of establishing inequality and eliminating the product ${\varepsilon_i}{\varepsilon_{i+1}}$. This procedure leads to the conservatism of application, though relatively superior control performance will be realized afterwards. To meet the different needs of practice, a more general and flexible FOABC strategy is derived from Theorem 1.

\begin{corollary}\label{Corollary 1}
For the purpose of compensation, a fractional order auxiliary system is designed to generate virtual signals $\lambda  = {[~{\lambda _1}, {\lambda _2}, \cdots , {\lambda _n}~]^{\rm T}}$
\begin{eqnarray}\label{Eq3-36}
\left\{ \begin{array}{rl}
{{\mathscr D}^{{\alpha _i}}}{\lambda _i} =& \hspace{-8pt} {\lambda _{i + 1}} - {a_i}{\mathop{\rm sgn}} ({\lambda _i}){\left| {{\lambda _i}} \right|^{{\mu _i}}}, i = 1, \cdots ,n - 1,\\
{{\mathscr D}^{{\alpha _n}}}{\lambda _n} =& \hspace{-8pt} \bar b\Delta w - {a_n}{\mathop{\rm sgn}} ({\lambda _n}){\left| {{\lambda _n}} \right|^{{\mu _n}}}.
\end{array} \right.
\end{eqnarray}
There is a control method that consists of

\noindent the error variables
\begin{eqnarray}\label{Eq3-37}
\left\{ \begin{array}{l}
{\varepsilon _1} = {{\bar x}_1} - r - {\lambda _1},\\
{\varepsilon _i} = {{\bar x}_i} - {{\mathscr D}^{\sum\limits_{j = 1}^{i - 1} {{\alpha _j}} }}r - {\tau _{i - 1}} - {\lambda _i}, ~i = 2, \cdots ,n,
\end{array} \right.
\end{eqnarray}
the stabilizing functions
\begin{eqnarray}\label{Eq3-38}
\left\{ \begin{array}{rl}
{\tau _1} =& \hspace{-8pt} - {c_1}{\mathop{\rm sgn}} ({\varepsilon _1}){\left| {{\varepsilon _1}} \right|^{{\sigma _1}}} - {{\bar \psi }_1}  - \bar \varphi _1^{\rm T}\hat \theta  \\
& \hspace{-8pt} - {a_1}{\mathop{\rm sgn}} ({\lambda _1}){\left| {{\lambda _1}} \right|^{{\mu _1}}},\\
{\tau _i} = & \hspace{-8pt} - {\varepsilon _{i - 1}} - {c_i}{\mathop{\rm sgn}} ({\varepsilon _i}){\left| {{\varepsilon _i}} \right|^{{\sigma _i}}} - {{\bar \psi }_i} - \bar \varphi _i^{\rm T}\hat \theta   \\
& \hspace{-8pt} + {{\mathscr D}^{{\alpha _i}}}{\tau _{i - 1}} - {a_i}{\mathop{\rm sgn}} ({\lambda _i}){\left| {{\lambda _i}} \right|^{{\mu _i}}},\\
& \hspace{-8pt}i = 2, \cdots ,n - 1,
\end{array} \right.
\end{eqnarray}
the parameter update law
\begin{eqnarray}\label{Eq3-39}
\left\{ \begin{array}{rl}
{{\mathscr D}^\beta }\hat \theta  =& \hspace{-8pt} \Lambda \sum\limits_{j = 1}^n {{\varepsilon _j}{{\bar \varphi }_j}}, \\
{{\mathscr D}^\rho }\hat D =& \hspace{-8pt} \xi \left| {{\varepsilon _n}} \right|,
\end{array} \right.
\end{eqnarray}
the adaptive control law
\begin{eqnarray}\label{Eq3-40}
\begin{array}{rl}
v =& \hspace{-8pt} \frac{1}{{\bar b}}[ - {\varepsilon _{n - 1}} - {c_n}{\mathop{\rm sgn}} ({\varepsilon _n}){\left| {{\varepsilon _n}} \right|^{{\sigma _n}}} - {{\bar \psi }_n} - \bar \varphi _n^{\rm{T}}\hat \theta  \\
& \hspace{-8pt} - {\mathop{\rm sgn}} ({\varepsilon _n})\hat D + {{\mathscr D}^{\sum\limits_{j = 1}^n {{\alpha _j}} }}r + {{\mathscr D}^{{\alpha _n}}}{\tau _{n - 1}} \\
& \hspace{-8pt} - {a_n}{\mathop{\rm sgn}} ({\lambda _n}){\left| {{\lambda _n}} \right|^{{\mu _n}}}].
\end{array}
\end{eqnarray}
Then all the signals in the closed-loop adaptive system are globally uniformly bounded, and the asymptotic tracking is achieved as
\begin{eqnarray}\label{Eq3-41}
\mathop {\lim }\limits_{t \to \infty } [y(t) - r(t)] = 0,
\end{eqnarray}
where $\hat \theta$ is the parameter estimate of $\theta$, $\hat D$ is the parameter estimate of $\bar D$, $\Lambda  \in {{\mathbb R}^{q \times q}}$ is a positive definite matrix,  $ a_i , c_i , \xi>0$, $0<\rho , \beta , \alpha _i , \sigma_i , \mu_i<1 ~(i =1, \cdots, n)$ and $\Delta w = w - v$.
\end{corollary}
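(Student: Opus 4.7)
The plan is to mimic the recursive backstepping argument of Theorem~\ref{Theorem 1}, but with two structural changes dictated by the new stabilizing functions: the quadratic damping $-c_i\varepsilon_i^2$ is replaced by the sign–power dissipation $-c_i|\varepsilon_i|^{\sigma_i+1}$, and the cross terms $\varepsilon_i\varepsilon_{i+1}$ that appear naturally in the Lyapunov derivative are no longer absorbed by Young's inequality but are cancelled exactly by the extra $-\varepsilon_{i-1}$ term inserted in $\tau_i$. This is what removes the conservative lower bounds $c_i>1$ and lets every $c_i$ be merely positive.

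First I would compute ${\mathscr D}^{\alpha_i}\varepsilon_i$ from (\ref{Eq3-37}) and substitute the dynamics of $\bar x_i$ and the auxiliary system (\ref{Eq3-36}); the key simplification is that ${\mathscr D}^{\alpha_i}\lambda_i$ contributes $-a_i\,\mathrm{sgn}(\lambda_i)|\lambda_i|^{\mu_i}+\lambda_{i+1}$, and both pieces are cancelled when the new $\tau_i$ in (\ref{Eq3-38}) is plugged in, so $\lambda_i$ never appears in the $\varepsilon_i$-equation. Then I would pass each equation to its frequency distributed form via Lemma~\ref{Lemma 1}, exactly as in (\ref{Eq3-18}), (\ref{Eq3-23}), (\ref{Eq3-30}), and use the same $\theta$- and $\bar D$-estimator frequency models (\ref{Eq3-16}), (\ref{Eq3-28}).

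Next I would take the Lyapunov candidate identical in form to (\ref{Eq3-19})--(\ref{Eq3-31}), namely
\begin{eqnarray*}
V_n = \tfrac12\!\int_0^\infty\!\!\mu_\beta z_\theta^{\rm T}\Lambda^{-1}z_\theta\,{\rm d}\omega
     +\tfrac{1}{2\xi}\!\int_0^\infty\!\!\mu_\rho z_D^2\,{\rm d}\omega
     +\sum_{j=1}^n\tfrac12\!\int_0^\infty\!\!\mu_{\alpha_j}z_j^2\,{\rm d}\omega,
\end{eqnarray*}
and differentiate step by step. At step $i\ge 2$, plugging (\ref{Eq3-38}) into $\dot V_i$ gives a dissipative term $-c_i|\varepsilon_i|^{\sigma_i+1}$, an adaptation term $\tilde\theta^{\rm T}(\sum_{j=1}^i\varepsilon_j\bar\varphi_j-\Lambda^{-1}{\mathscr D}^\beta\hat\theta)$, the residual cross term $+\varepsilon_i\varepsilon_{i+1}$, and crucially a $-\varepsilon_i\varepsilon_{i-1}$ coming from the $-\varepsilon_{i-1}$ piece of $\tau_i$, which exactly annihilates the $+\varepsilon_{i-1}\varepsilon_i$ left over from step $i-1$. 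Thus no Young-type bound is invoked and no $\tfrac12\varepsilon_i^2$ surplus appears, so the constants $c_i,a_i$ only need to be positive. At the final step I would use $\varepsilon_n\bar d-|\varepsilon_n|\hat D\le \tilde D|\varepsilon_n|$ as in (\ref{Eq3-34}) and plug in the update laws (\ref{Eq3-39}) to annihilate both $\tilde\theta$- and $\tilde D$-terms, leaving
\begin{eqnarray*}
\dot V_n \le -\!\int_0^\infty\!\!\omega\mu_\beta z_\theta^{\rm T}\Lambda^{-1}z_\theta\,{\rm d}\omega
-\!\sum_{j=1}^n\!\int_0^\infty\!\!\omega\mu_{\alpha_j}z_j^2\,{\rm d}\omega
-\tfrac{1}{\xi}\!\int_0^\infty\!\!\omega\mu_\rho z_D^2\,{\rm d}\omega
-\sum_{j=1}^n c_j|\varepsilon_j|^{\sigma_j+1}.
\end{eqnarray*}

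The main obstacle I expect is verifying the telescoping cancellation of the cross terms rigorously through the recursion, i.e.\ keeping careful bookkeeping so that the $+\varepsilon_{i-1}\varepsilon_i$ inherited from step $i-1$ is precisely the term that the new $-\varepsilon_{i-1}$ in $\tau_i$ kills at step $i$, and ensuring this pattern continues cleanly all the way up to $v$ in (\ref{Eq3-40}). Once $\dot V_n\le 0$ is established with a negative semi-definite right-hand side whose zero set forces $\varepsilon_j\equiv 0$ for all $j$ and $z_\theta,z_D\equiv 0$, LaSalle's invariance principle \cite{LaSalle:1965DEDS} yields boundedness of all closed-loop signals, asymptotic convergence of each $\varepsilon_j$ (hence of $y-r=\varepsilon_1+\lambda_1$, once $\lambda_1\to 0$ is checked from the stable auxiliary system (\ref{Eq3-36})), and the conclusion (\ref{Eq3-41}), exactly as in Theorem~\ref{Theorem 1}.
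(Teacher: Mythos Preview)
Your proposal is correct and matches the paper's intended argument: the paper omits the proof of Corollary~\ref{Corollary 1} entirely, saying only that ``one can easily complete it according to the procedure of Theorem~\ref{Theorem 1},'' and your outline does exactly that. Your identification of the key structural change---that the $-\varepsilon_{i-1}$ term in $\tau_i$ cancels the cross term $\varepsilon_{i-1}\varepsilon_i$ exactly, replacing the Young-inequality absorption used in Theorem~\ref{Theorem 1} and thereby lifting the constraints $c_i>1$---is precisely the mechanism the paper highlights in Remark~\ref{Remark 4}.
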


The proof of Corollary 1 is omitted here, because one can easily complete it according to the procedure of Theorem \ref{Theorem 1}.


\subsection{FOABC with unknown $\bar b$}
Last subsection discusses about the FOABC under the condition of known $\bar b$, while the unknown case is common and  more complicated. For example, when the slope $m$ or input coefficient $b$ is unknown, $\bar b=\delta_n b m$ will be unknown as well and an alternative FOABC method without known $\bar b$ is expected.

For the purpose of compensation, a fractional order auxiliary system is designed to generate virtual signals $\lambda  = {[~{\lambda _1}, {\lambda _2}, \cdots , {\lambda _n}~]^{\rm T}}$ in the first place
\begin{eqnarray}\label{Eq3-4-1}
\left\{ \begin{array}{l}
{{\mathscr D}^{{\alpha _i}}}{\lambda _i} = {\lambda _{i + 1}} - {c_i}{\lambda _i},i = 1, \cdots ,n - 1,\\
{{\mathscr D}^{{\alpha _n}}}{\lambda _n} = \frac{1}{{\hat p}}\Delta w - {c_n}{\lambda _n},
\end{array} \right.
\end{eqnarray}
where ${\hat p}$ is the estimate of $\frac{1}{{\bar b}}$, $\Delta w = w - v$, $c_i>1$ ($~i=2,3,\cdots,n-1$) and $c_1,~c_n>0.5$.

\begin{theorem}\label{Theorem 2}
Considering the plant (\ref{Eq2-1}) with known ${\rm sgn}(b)$, there is a control method that consists of

\noindent the error variables
\begin{eqnarray}\label{Eq3-4-2}
\left\{ \begin{array}{l}
{\varepsilon _1} = {{\bar x}_1} - r - {\lambda _1},\\
{\varepsilon _i} = {{\bar x}_i} - {{\mathscr D}^{\sum\limits_{j = 1}^{i - 1} {{\alpha _j}} }}r - {\tau _{i - 1}} - {\lambda _i},~i = 2, \cdots ,n,
\end{array} \right.
\end{eqnarray}
the stabilizing functions
\begin{eqnarray}\label{Eq3-4-3}
\left\{ \begin{array}{rl}
{\tau _1} =& \hspace{-8pt} - {c_1}({{\bar x}_1} - r) - {{\bar \psi }_1} - \bar \varphi _1^{\rm T}\hat \theta, \\
{\tau _i} =& \hspace{-8pt} - {c_i}({{\bar x}_i} - {{\mathscr D}^{\sum\limits_{j = 1}^{i - 1} {{\alpha _j}} }}r - {\tau _{i - 1}}) + {{\mathscr D}^{{\alpha _i}}}{\tau _{i - 1}} - {{\bar \psi }_i}  \\
& \hspace{-8pt} - \bar \varphi _i^{\rm T}\hat \theta, ~i = 2, \cdots ,n - 1,
\end{array} \right.
\end{eqnarray}
the parameter update law
\begin{eqnarray}\label{Eq3-4-4}
\left\{ \begin{array}{l}
{{\mathscr D}^\beta }\hat \theta  = \Lambda \sum\limits_{j = 1}^n {{\varepsilon _j}{{\bar \varphi }_j}}, \\
{{\mathscr D}^\gamma }\hat p =  - \eta {\mathop{\rm sgn}} (\bar b){\varepsilon _n}\bar w,\\
{{\mathscr D}^\rho }\hat D = \xi \left| {{\varepsilon _n}} \right|,
\end{array} \right.
\end{eqnarray}
the adaptive control law
\begin{eqnarray}\label{Eq3-4-5}
\left\{ \begin{array}{rl}
v =& \hspace{-8pt} \hat p\bar v,\\
\bar v =& \hspace{-8pt} - {c_n}{\lambda _n} - {c_n}{\varepsilon _n} - {\mathop{\rm sgn}} ({\varepsilon _n})\hat D + {{\mathscr D}^{\sum\limits_{j = 1}^n {{\alpha _j}} }}r \\
& \hspace{-8pt} + {{\mathscr D}^{{\alpha _n}}}{\tau _{n - 1}} - {{\bar \psi }_n} - {{\bar \varphi }^{\rm T}_n}\hat \theta .
\end{array} \right.
\end{eqnarray}
Then all the signals in the closed-loop adaptive system are globally uniformly bounded, and the asymptotic tracking is achieved as
\begin{eqnarray}\label{Eq3-4-6}
\mathop {\lim }\limits_{t \to \infty } [y(t) - r(t)] = 0,
\end{eqnarray}
where $\hat p$ is the parameter estimate of $p=\frac{1}{{\bar b}}$, $\hat \theta$ is the parameter estimate of $\theta$, $\hat D$ is the parameter estimate of $\bar D$, $\Lambda  \in {{\mathbb R}^{q \times q}}$ is a positive definite matrix, $0 < \gamma, ~ \beta, ~ \rho  < 1$, $\eta, \xi  > 0$ and $w = \hat p\bar w$.
\end{theorem}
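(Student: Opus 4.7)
The plan is to follow the Lyapunov/backstepping template of Theorem~\ref{Theorem 1} step by step, noting that Steps $1$ through $n-1$ carry over verbatim: the stabilizing functions (\ref{Eq3-4-3}) coincide with (\ref{Eq3-11}) and the first $n-1$ rows of the auxiliary system (\ref{Eq3-4-1}) match those of (\ref{Eq3-9}), so neither the unknown gain $\bar b$ nor the new estimator $\hat p$ enters the induction until the last level. The novelty is therefore confined to Step~$n$, where I would introduce the estimation error $\tilde p = p - \hat p$ and, in parallel to (\ref{Eq3-15})--(\ref{Eq3-16}) and (\ref{Eq3-27})--(\ref{Eq3-28}), turn ${\mathscr D}^\gamma \tilde p = -{\mathscr D}^\gamma \hat p$ into a frequency-distributed state $z_p(\omega,t)$ with weight $\mu_\gamma$.

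For the augmented Lyapunov candidate I would take
\begin{eqnarray*}
V_n = V_{n-1} + \tfrac{1}{2}\int_0^\infty \mu_{\alpha_n}(\omega) z_n^2\, {\rm d}\omega + \tfrac{1}{2\xi}\int_0^\infty \mu_\rho(\omega) z_D^2\, {\rm d}\omega + \tfrac{|\bar b|}{2\eta}\int_0^\infty \mu_\gamma(\omega) z_p^2\, {\rm d}\omega.
\end{eqnarray*}
The key algebraic identity driving the final step is $\bar b\hat p = 1 - \bar b\tilde p$, which, combined with $w = v + \Delta w$, $v = \hat p\bar v$, and the $\lambda_n$-equation in (\ref{Eq3-4-1}), reshapes ${\mathscr D}^{\alpha_n}\varepsilon_n$ so that the $\hat p$-scaled pieces collapse to $\bar v - \bar b\tilde p\,\bar w$ plus the familiar deterministic residuals. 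Substituting $\bar v$ from (\ref{Eq3-4-5}) then annihilates every deterministic term, leaving only the $\tilde\theta^{\rm T}$-, $\tilde D$-, and $\tilde p$-cross terms.

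The new cross term is exactly $-\bar b\tilde p\,\varepsilon_n\bar w$, and the second slot of (\ref{Eq3-4-4}) is built to cancel it: computing $-\tfrac{|\bar b|}{\eta}\tilde p\,{\mathscr D}^\gamma\hat p$ with ${\mathscr D}^\gamma\hat p = -\eta\,{\mathop{\rm sgn}}(\bar b)\varepsilon_n\bar w$ produces $\bar b\tilde p\,\varepsilon_n\bar w$ regardless of the sign of $\bar b$. The $\hat\theta$ and $\hat D$ updates dispose of the remaining cross terms exactly as in (\ref{Eq3-32})--(\ref{Eq3-35}), with inequality (\ref{Eq3-34}) absorbing the disturbance $\bar d$. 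The upshot is a nonpositive bound on $\dot V_n$ whose only state-dependent terms are $-\sum_{j=1}^n \bar c_j\varepsilon_j^2$ and the $-\omega$-weighted frequency integrals for $z_\theta$, $z_j$, $z_D$, $z_p$; the LaSalle invariance principle, invoked as in the closing paragraph of Theorem~\ref{Theorem 1}, then yields $\varepsilon_j\to 0$ and, via $y = \varepsilon_1 + r + \lambda_1$ together with convergence of the auxiliary system (\ref{Eq3-4-1}) driven by the bounded signal $\Delta w$, the tracking claim (\ref{Eq3-4-6}).

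The main obstacle I anticipate is bookkeeping around the sign and scaling of the $\hat p$-update: the Lyapunov weight $|\bar b|/(2\eta)$ and the ${\mathop{\rm sgn}}(\bar b)$ factor in (\ref{Eq3-4-4}) must combine precisely to cancel $-\bar b\tilde p\,\varepsilon_n\bar w$ for both signs of $\bar b$, which is exactly why Assumption~1 is indispensable. A secondary delicacy is guaranteeing that $\bar w = w/\hat p$ is well-defined and bounded for feedback, i.e., that $\hat p$ stays bounded away from zero; a parameter-projection argument on $\hat p$ inside the update law would provide this safeguard if needed.
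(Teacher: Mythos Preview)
Your proposal is correct and mirrors the paper's own proof: the paper likewise declares Steps~$1$ through $n-1$ identical to Theorem~\ref{Theorem 1}, introduces $\tilde p$ with its frequency-distributed model, uses the same Lyapunov augmentation with weight $|\bar b|/(2\eta)$ on $z_p^2$, and reduces ${\mathscr D}^{\alpha_n}\varepsilon_n$ to $\bar v - \bar b\tilde p\,\bar w + (\text{residuals})$ via the identity $\bar bw - \tfrac{1}{\hat p}\Delta w = \bar v - \bar b\tilde p\,\bar w$ (equivalent to your $\bar b\hat p = 1 - \bar b\tilde p$). Your closing caveat about $\hat p$ avoiding zero is a genuine issue the paper does not address, so flagging projection as a safeguard is a reasonable addition rather than a deviation.
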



\begin{proof}
Since the difference between known $\bar b$ and unknown $\bar b$ only depends on the last state equation (\ref{Eq2-1}), the first $n-1$ steps in this theorem are identical to the proof of Theorem \ref{Theorem 1}, thus they are omitted here in case of repetitive work. Due to the introduction of an unknown parameter $\bar b$, we first design a new estimator that regards $\hat p$  as the estimate of $p = 1 / \bar b$ and ${\tilde p}=p-\hat p$ as the relevant estimated error. Note that
\begin{eqnarray}\label{Eq3-4-7}
{{\mathscr D}^\gamma }\tilde p = {{\mathscr D}^\gamma }p - {{\mathscr D}^\gamma }\hat p =  - {{\mathscr D}^\gamma }\hat p,
\end{eqnarray}
with the fractional order of update law $0< \gamma < 1$. Its frequency distributed model is analogous to (\ref{Eq3-16}) and (\ref{Eq3-28}) which could be written as
\begin{eqnarray}\label{Eq3-4-8}
\left\{ \begin{array}{rl}
\frac{{\partial {z_p}(\omega ,t)}}{{\partial t}} =& \hspace{-8pt} - \omega {z_p}(\omega ,t) - {{\mathscr D}^\gamma }\hat p,\\
\tilde p =& \hspace{-8pt} \int_0^\infty  {{\mu _\gamma }(\omega ){z_p}(\omega ,t){\rm d}\omega },
\end{array} \right.
\end{eqnarray}
with ${\mu _\gamma }{\rm{(}}\omega {\rm{) = }}\frac{{\sin (\gamma \pi )}}{{{\omega ^\gamma }\pi }}$.

With the definition of
\begin{eqnarray}\label{Eq3-4-9}
\left\{ \begin{array}{l}
w = \hat p\bar w,\\
v = \hat p\bar v,
\end{array} \right.
\end{eqnarray}
then we have
\begin{eqnarray}\label{Eq3-4-10}
\begin{array}{r}
\bar bw - \frac{1}{{\hat p}}\Delta w = \bar w - \bar b\tilde p\bar w - (\bar w - \bar v) = \bar v - \bar b\tilde p\bar w.
\end{array}
\end{eqnarray}

The derivative of $\varepsilon_n$ is
\begin{eqnarray}\label{Eq3-4-11}
\begin{array}{rl}
{{\mathscr D}^{{\alpha _n}}}{\varepsilon _n} =& \hspace{-8pt} {{\mathscr D}^{{\alpha _n}}}{{\bar x}_n} - {{\mathscr D}^{\sum\limits_{j = 1}^n {{\alpha _j}} }}r - {{\mathscr D}^{{\alpha _n}}}{\tau _{n - 1}} - {{\mathscr D}^{{\alpha _n}}}{\lambda _n}\\
 =& \hspace{-8pt} \bar bw + {{\bar \psi }_n} + \bar \varphi _n^{\rm{T}}\theta  + \bar d - {{\mathscr D}^{\sum\limits_{j = 1}^n {{\alpha _j}} }}r \\
& \hspace{-8pt} - {{\mathscr D}^{{\alpha _n}}}{\tau _{n - 1}} - \frac{1}{{\hat p}}\Delta w + {c_n}{\lambda _n}\\
 =& \hspace{-8pt} \bar v - \bar b\tilde p\bar w + {{\bar \psi }_n} + \bar \varphi _n^{\rm{T}}\theta  + \bar d - {{\mathscr D}^{\sum\limits_{j = 1}^n {{\alpha _j}} }}r \\
& \hspace{-8pt} - {{\mathscr D}^{{\alpha _n}}}{\tau _{n - 1}} +  {c_n}{\lambda _n}.
\end{array}
\end{eqnarray}

It's corresponding frequency distributed model is
\begin{eqnarray}\label{Eq3-4-12}
\left\{ \begin{array}{rl}
\frac{{\partial {z_n}(\omega ,t)}}{{\partial t}} =& \hspace{-8pt} - \omega {z_n}(\omega ,t) + \bar v - \bar b\tilde p\bar w + {{\bar \psi }_n} + \bar \varphi _n^{\rm{T}}\theta  \\
& \hspace{-8pt} + \bar d - {{\mathscr D}^{\sum\limits_{j = 1}^n {{\alpha _j}} }}r - {{\mathscr D}^{{\alpha _n}}}{\tau _{n - 1}}  + {c_n}{\lambda _n},\\
{\varepsilon _n} = & \hspace{-8pt} \int_0^\infty  {{\mu _{{\alpha _n}}}(\omega ){z_n}(\omega ,t){\rm{d}}\omega },
\end{array} \right.
\end{eqnarray}
with ${\mu _{{\alpha _n}}}{\rm{(}}\omega {\rm{) = }}\frac{{\sin ({\alpha _n}\pi )}}{{{\omega ^{{\alpha _n}}}\pi }}$.

Selecting the Lyapunov function
\begin{eqnarray}\label{Eq3-4-13}
\begin{array}{rl}
{V_n} =& \hspace{-8pt} {V_{n - 1}} + \frac{1}{2}\int_0^\infty  {{\mu _{{\alpha _n}}}(\omega )z_n^2(\omega ,t){\rm{d}}\omega }  \\
& \hspace{-8pt} + \frac{{\left| {\bar b} \right|}}{{2\eta }}\int_0^\infty  {{\mu _\gamma }(\omega )z_p^2(\omega ,t){\rm{d}}\omega } \\
& \hspace{-8pt} + \frac{1}{{2\xi }}\int_0^\infty  {{\mu _\rho }(\omega )z_D^2(\omega ,t){\rm{d}}\omega },
\end{array}
\end{eqnarray}
and adopting the preceding  derivative $\dot V_{n-1}$, the derivative of $ V_{n}$ is
\begin{eqnarray}\label{Eq3-4-14}
\begin{array}{rl}
{{\dot V}_n} =& \hspace{-8pt} {{\dot V}_{n - 1}} - \int_0^\infty  {\omega {\mu _{{\alpha _n}}}(\omega )z_n^2(\omega ,t){\rm{d}}\omega }  \\
& \hspace{-8pt} - \frac{{\left| {\bar b} \right|}}{\eta }\int_0^\infty  {\omega {\mu _\gamma }(\omega )z_p^2(\omega ,t){\rm{d}}\omega } \\
& \hspace{-8pt} - \frac{1}{\xi }\int_0^\infty  {\omega {\mu _\rho }(\omega )z_D^2(\omega ,t){\rm{d}}\omega } + {\varepsilon _n}( - \bar b\tilde p\bar w + {{\bar \psi }_n} \\
& \hspace{-8pt} + \bar \varphi _n^{\rm{T}}\theta  + \bar d  - {{\mathscr D}^{\sum\limits_{j = 1}^n {{\alpha _j}} }}r - {{\mathscr D}^{{\alpha _n}}}{\tau _{n - 1}} + \bar v + {c_n}{\lambda _n})\\
& \hspace{-8pt} - \frac{{\left| {\bar b} \right|}}{\eta }\tilde p{{\mathscr D}^\gamma }\hat p - \frac{1}{\xi }\tilde D{{\mathscr D}^\rho }\hat D\\
 \le & \hspace{-8pt} - \sum\limits_{j = 1}^n {\int_0^\infty  {\omega {\mu _{{\alpha _j}}}(\omega )z_j^2(\omega ,t){\rm{d}}\omega } }  \\
& \hspace{-8pt} - \int_0^\infty  {\omega {\mu _\beta }(\omega )z_\theta ^{\rm{T}}(\omega ,t){\Lambda ^{ - 1}}{z_\theta }(\omega ,t){\rm{d}}\omega }  \\
& \hspace{-8pt} - \frac{{\left| {\bar b} \right|}}{\eta }\int_0^\infty  {\omega {\mu _\gamma }(\omega )z_p^2(\omega ,t){\rm{d}}\omega } \\
& \hspace{-8pt} - \frac{1}{\xi }\int_0^\infty  {\omega {\mu _\rho }(\omega )z_D^2(\omega ,t){\rm{d}}\omega } \\
& \hspace{-8pt} - \sum\limits_{j = 1}^{n - 1} {{{\bar c}_j}\varepsilon _j^2} + \frac{1}{2}\varepsilon _n^2 - \tilde p\left| {\bar b} \right|[\frac{1}{\eta }{{\mathscr D}^\gamma }\hat p + {\varepsilon _n}{\mathop{\rm sgn}} (\bar b)\bar w] \\
& \hspace{-8pt} + {{\tilde \theta }^{\rm{T}}}(\sum\limits_{j = 1}^n {{\varepsilon _j}{{\bar \varphi }_j}}  - {\Lambda ^{ - 1}}{{\mathscr D}^\beta }\hat \theta )   + {\varepsilon _n}[{{\bar \psi }_n} + \bar \varphi _n^{\rm{T}}\hat \theta \\
& \hspace{-8pt} + {\mathop{\rm sgn}} ({\varepsilon _n})\hat D - {{\mathscr D}^{\sum\limits_{j = 1}^n {{\alpha _j}} }}r  - {{\mathscr D}^{{\alpha _n}}}{\tau _{n - 1}} + \bar v  \\
& \hspace{-8pt}  + {c_n}{\lambda _n}] + {\varepsilon _n}\bar d - \left| {{\varepsilon _n}} \right|\hat D - \frac{1}{\xi }\tilde D{{\mathscr D}^\rho }\hat D.
\end{array}
\end{eqnarray}

Taking the parameter update law (\ref{Eq3-4-4}) and adaptive control law (\ref{Eq3-4-5}) into account, the above derivative $\dot V_n$ will be simplified
\begin{eqnarray}\label{Eq3-4-15}
\begin{array}{rl}
{{\dot V}_n} \le & \hspace{-8pt} - \sum\limits_{j = 1}^n {\int_0^\infty  {\omega {\mu _{{\alpha _j}}}(\omega )z_j^2(\omega ,t){\rm{d}}\omega } }  \\
& \hspace{-8pt} - \int_0^\infty  {\omega {\mu _\beta }(\omega )z_\theta ^{\rm{T}}(\omega ,t){\Lambda ^{ - 1}}{z_\theta }(\omega ,t){\rm{d}}\omega } \\
& \hspace{-8pt} - \frac{{\left| {\bar b} \right|}}{\eta }\int_0^\infty  {\omega {\mu _\gamma }(\omega )z_p^2(\omega ,t){\rm{d}}\omega } \\
& \hspace{-8pt} - \frac{1}{\xi }\int_0^\infty  {\omega {\mu _\rho }(\omega )z_D^2(\omega ,t){\rm{d}}\omega }  - \sum\limits_{j = 1}^n {{{\bar c}_j}\varepsilon _j^2},
\end{array}
\end{eqnarray}
where ${{\bar c}_n} = {c_n} - \frac{1}{2} > 0$.

Based on the LaSalle invariant principle, the error variable $\varepsilon_i$ are convergent to zero by the similar proof of Theorem 1, which establishes Theorem 2.
\end{proof}

Just like Theorem 1, in the recursive procedure of controllers design, every coefficient $c_i$ is required to be greater than a certain constant with the purpose of establishing inequality and eliminating the product ${\varepsilon_i}{\varepsilon_{i+1}}$. This procedure also leads to the conservatism of application, though relatively superior control performance will be realized afterwards. To meet the different needs of control, a more general and flexible FOABC strategy is derived from Theorem 2 and Corollary 1.


\begin{corollary}\label{Corollary 2}
For the purpose of compensation, a fractional order auxiliary system is designed to generate virtual signals $\lambda  = {[~{\lambda _1}, {\lambda _2}, \cdots , {\lambda _n}~]^{\rm T}}$
\begin{eqnarray}\label{Eq3-4-16}
\left\{ \begin{array}{rl}
{{\mathscr D}^{{\alpha _i}}}{\lambda _i} =& \hspace{-8pt} {\lambda _{i + 1}} - {a_i}{\mathop{\rm sgn}} ({\lambda _i}){\left| {{\lambda _i}} \right|^{{\mu _i}}},\\
& \hspace{-8pt} i = 1, \cdots ,n - 1,\\
{{\mathscr D}^{{\alpha _n}}}{\lambda _n} =& \hspace{-8pt} \frac{1}{{\hat p}}\Delta w - {a_n}{\mathop{\rm sgn}} ({\lambda _n}){\left| {{\lambda _n}} \right|^{{\mu _n}}}.
\end{array} \right.
\end{eqnarray}
There is a control method that consists of

\noindent the error variables
\begin{eqnarray}\label{Eq3-4-17}
\left\{ \begin{array}{l}
{\varepsilon _1} = {{\bar x}_1} - r - {\lambda _1},\\
{\varepsilon _i} = {{\bar x}_i} - {{\mathscr D}^{\sum\limits_{j = 1}^{i - 1} {{\alpha _j}} }}r - {\tau _{i - 1}} - {\lambda _i}, ~i = 2, \cdots ,n,
\end{array} \right.
\end{eqnarray}
the stabilizing functions
\begin{eqnarray}\label{Eq3-4-18}
\left\{ \begin{array}{rl}
{\tau _1} =& \hspace{-8pt} - {c_1}{\mathop{\rm sgn}} ({\varepsilon _1}){\left| {{\varepsilon _1}} \right|^{{\sigma _1}}} - {{\bar \psi }_1} - \bar \varphi _1^{\rm T}\hat \theta  \\
& \hspace{-8pt} - {a_1}{\mathop{\rm sgn}} ({\lambda _1}){\left| {{\lambda _1}} \right|^{{\mu _1}}},\\
{\tau _i} =& \hspace{-8pt} - {\varepsilon _{i - 1}} - {c_i}{\mathop{\rm sgn}} ({\varepsilon _i}){\left| {{\varepsilon _i}} \right|^{{\sigma _i}}} - {{\bar \psi }_i} - \bar \varphi _i^{\rm T}\hat \theta   \\
& \hspace{-8pt} + {{\mathscr D}^{{\alpha _i}}}{\tau _{i - 1}} - {a_i}{\mathop{\rm sgn}} ({\lambda _i}){\left| {{\lambda _i}} \right|^{{\mu _i}}},\\
& \hspace{-8pt} i = 2, \cdots ,n - 1,
\end{array} \right.
\end{eqnarray}
the parameter update law
\begin{eqnarray}\label{Eq3-4-19}
\left\{ \begin{array}{l}
{{\mathscr D}^\beta }\hat \theta  = \Lambda \sum\limits_{j = 1}^n {{\varepsilon _j}{{\bar \varphi }_j}}, \\
{{\mathscr D}^\gamma }\hat p =  - \eta {\mathop{\rm sgn}} (\bar b){\varepsilon _n}\bar w,\\
{{\mathscr D}^\rho }\hat D = \xi \left| {{\varepsilon _n}} \right|,
\end{array} \right.
\end{eqnarray}
the adaptive control law
\begin{eqnarray}\label{Eq3-4-20}
\left\{ \begin{array}{rl}
v =& \hspace{-8pt} \hat p\bar v,\\
\bar v =& \hspace{-8pt} - {\varepsilon _{n - 1}} - {c_n}{\mathop{\rm sgn}} ({\varepsilon _n}){\left| {{\varepsilon _n}} \right|^{{\sigma _n}}} - {\mathop{\rm sgn}} ({\varepsilon _n})\hat D \\
& \hspace{-8pt} + {{\mathscr D}^{\sum\limits_{j = 1}^n {{\alpha _j}} }}r + {{\mathscr D}^{{\alpha _n}}}{\tau _{n - 1}} - {{\bar \psi }_n} - \bar \varphi _n^{\rm{T}}\hat \theta \\
& \hspace{-8pt} - {a_n}{\mathop{\rm sgn}} ({\lambda _n}){\left| {{\lambda _n}} \right|^{{\mu _n}}}.
\end{array} \right.
\end{eqnarray}
Then all the signals in the closed-loop adaptive system are globally uniformly bounded, and the asymptotic tracking is achieved as
\begin{eqnarray}\label{Eq3-4-21}
\mathop {\lim }\limits_{t \to \infty } [y(t) - r(t)] = 0,
\end{eqnarray}
where $\hat p$ is the parameter estimate of $\frac{1}{{\bar b}}$, $\hat \theta$ is the parameter estimate of $\theta$, $\hat D$ is the parameter estimate of $\bar D$, $\Lambda  \in {{\mathbb R}^{q \times q}}$ is a positive definite matrix, $0< \gamma , \beta , \rho , \alpha _i , \sigma_i , \mu_i<1$ , $\eta , \xi ,  a_i , c_i>0 ~(i=1, \cdots, n)$, $w = \hat p\bar w$ and $\Delta w = w - v$.
\end{corollary}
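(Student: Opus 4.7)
The plan is to mirror the proof of Theorem \ref{Theorem 2} step by step, while importing from Corollary \ref{Corollary 1} the treatment of the nonsmooth stabilizing terms $c_i\,\mathrm{sgn}(\varepsilon_i)|\varepsilon_i|^{\sigma_i}$ and the auxiliary-system terms $a_i\,\mathrm{sgn}(\lambda_i)|\lambda_i|^{\mu_i}$. First I set up three estimation errors $\tilde\theta=\theta-\hat\theta$, $\tilde p=p-\hat p$ with $p=1/\bar b$, and $\tilde D=\bar D-\hat D$, and invoke Lemma \ref{Lemma 1} to represent each of $\mathscr{D}^{\beta}\tilde\theta$, $\mathscr{D}^{\gamma}\tilde p$, $\mathscr{D}^{\rho}\tilde D$ in its frequency-distributed form on $[0,\infty)$ with kernels $\mu_\beta,\mu_\gamma,\mu_\rho$. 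The same lemma is applied to the $\mathscr{D}^{\alpha_i}\varepsilon_i$ equations obtained at each step of the recursion, giving true states $z_i(\omega,t)$.

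Next I carry out the backstepping recursion. At step $i$ ($i=1,\dots,n-1$), using (\ref{Eq3-4-16}) to eliminate $\mathscr{D}^{\alpha_i}\lambda_i$ and (\ref{Eq3-4-17}) to expand $\bar x_i$, I obtain
\[
\mathscr{D}^{\alpha_i}\varepsilon_i=\varepsilon_{i+1}+\tau_i+\bar\psi_i+\bar\varphi_i^{\mathrm{T}}\theta-\mathscr{D}^{\alpha_i}\tau_{i-1}+a_i\,\mathrm{sgn}(\lambda_i)|\lambda_i|^{\mu_i}.
\]
Choosing $\tau_i$ as in (\ref{Eq3-4-18}) cancels $\mathscr{D}^{\alpha_i}\tau_{i-1}$, $\bar\psi_i$, the estimate part $\bar\varphi_i^{\mathrm{T}}\hat\theta$, the auxiliary $a_i$ term, and the upstream coupling $\varepsilon_{i-1}$. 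Accumulating the Lyapunov function
\[
V_i=V_{i-1}+\tfrac{1}{2}\int_0^\infty\mu_{\alpha_i}(\omega)z_i^2(\omega,t)\,\mathrm{d}\omega,
\]
I find that its derivative picks up $-c_i|\varepsilon_i|^{1+\sigma_i}$, the dissipative term $-\int_0^\infty\omega\mu_{\alpha_i}z_i^2\,\mathrm{d}\omega$, and a cross term $\varepsilon_i\varepsilon_{i+1}$ that is absorbed exactly at step $i+1$ by the $-\varepsilon_i$ summand in $\tau_{i+1}$. Because this cancellation is exact, the more restrictive lower bounds $c_i>1/2$ or $c_i>1$ from Theorem \ref{Theorem 2} are not needed here: only $c_i>0$ is required.

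The last step is the delicate one and follows Theorem \ref{Theorem 2} essentially verbatim, with only the final stabilizing contribution changed. The algebraic identity
\[
\bar b\,w-\tfrac{1}{\hat p}\Delta w=\bar v-\bar b\,\tilde p\,\bar w
\]
together with the augmented Lyapunov function
\[
V_n=V_{n-1}+\tfrac{1}{2}\int_0^\infty\mu_{\alpha_n}z_n^2\,\mathrm{d}\omega+\tfrac{|\bar b|}{2\eta}\int_0^\infty\mu_\gamma z_p^2\,\mathrm{d}\omega+\tfrac{1}{2\xi}\int_0^\infty\mu_\rho z_D^2\,\mathrm{d}\omega
\]
and the disturbance bound $\varepsilon_n\bar d-|\varepsilon_n|\hat D\le\tilde D|\varepsilon_n|$ allow the updates (\ref{Eq3-4-19}) and control (\ref{Eq3-4-20}) to reduce $\dot V_n$ to a sum of nonpositive frequency integrals plus $-\sum_{j=1}^{n}c_j|\varepsilon_j|^{1+\sigma_j}$. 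Applying the LaSalle invariance principle on the equivalent infinite-dimensional model (Remark \ref{Remark 1}) yields boundedness of every estimator and of all $\varepsilon_j$, followed by $\varepsilon_j\to 0$; the tracking conclusion (\ref{Eq3-4-21}) then follows from $\varepsilon_1=\bar x_1-r-\lambda_1\to 0$ together with $\lambda_1\to 0$.

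The main obstacle I expect is justifying that the auxiliary state $\lambda$ stays bounded: the forcing of the $\lambda_n$ equation is $\Delta w/\hat p$, which in principle depends on $v=\hat p\bar v$ and hence on all $\varepsilon_j$ and on $\hat p$ itself. Here the exact cancellation of the $a_i\,\mathrm{sgn}(\lambda_i)|\lambda_i|^{\mu_i}$ term between (\ref{Eq3-4-16}) and (\ref{Eq3-4-18}) is crucial, since it makes the $\varepsilon$-subsystem totally independent of $\lambda$; then the fractional-power dissipation in (\ref{Eq3-4-16}) keeps $\lambda_i$ bounded whenever $\Delta w$ is bounded, following the same decoupling argument used in \cite{Lin:2012NODY}. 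Once boundedness of $\lambda$ is secured, the rest of the Lyapunov chain above closes as in Theorem \ref{Theorem 2}, and the corollary follows.
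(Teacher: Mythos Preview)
Your proposal is correct and follows exactly the route the paper intends: the paper itself omits the proof of Corollary~\ref{Corollary 2}, stating only that it can be completed according to the procedure of Theorem~\ref{Theorem 2}, and your outline does precisely that while incorporating the Corollary~\ref{Corollary 1} device of cancelling $\varepsilon_{i-1}\varepsilon_i$ directly via the $-\varepsilon_{i-1}$ term in $\tau_i$ rather than by Young's inequality (cf.\ Remark~\ref{Remark 4}). Your identification of the decoupling of the $\varepsilon$-subsystem from $\lambda$ and the appeal to \cite{Lin:2012NODY} for boundedness of the auxiliary states also mirrors the paper's own reasoning at the end of the proof of Theorem~\ref{Theorem 1}.
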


The proof of Corollary 2 is omitted as well, since one can easily complete it according to the procedure of Theorem \ref{Theorem 2}.

The proposed theorems and corollaries provide a new and original approach that the control input subject to saturation and dead zone could be coped with separately. By applying adaptive backstepping control strategy, the controller design is completed in the procedure of proof afterwards. To highlight the characteristics and advantages of the proposed control strategy clearly, the following remark is presented here, which is also part of our contributions.


\begin{remark}\label{Remark 4}
There are four additional innovations needing emphasis.
\begin{itemize}
  \item With the introduction of intermediate variable $w$ and projection, the control input subject to nonsmooth nonlinear dead zone and saturation is decomposed into two independent parts. The desired input $v$ goes through saturation part and changes into intermediate variable $w$, then $w$ changes into the actual input $u$ after dead zone part. As the bounded disturbance $d'$ combines with a bounded term $d''$ of dead zone part, the disturbance-like term forms, and finally, the problem of dead zone and saturation is transformed into that of disturbance and saturation. It is worth mentioning that when the $\bar b$ is unknown, the slope of input $m$ may be also unknown, that is, all parameters of input nonsmooth nonlinearities are unknown, but our proposed method still works according to Theorem \ref{Theorem 2} and Corollary \ref{Corollary 2}. As far as we known, there isn't any similar solution to dead zone and saturation no matter whether the object is fractional order or not.
  \item In Corollary \ref{Corollary 1} and Corollary \ref{Corollary 2}, instead of establishing the inequality just as Theorem \ref{Theorem 1} and Theorem \ref{Theorem 2}, the product ${\varepsilon_i}{\varepsilon_{i+1}}$ is eliminated directly by adding the error variables ${\varepsilon_i}$ into the stabilizing functions $\tau _i$ and the adaptive control law $v$, which reduces the restraint of coefficients $c_i$. Compared with the traditional linear feedback elements ${c_i}{\varepsilon _i}$ and ${c_i}{\lambda _i}$, the nonlinear elements ${c_i}{\mathop{\rm sgn}} ({\varepsilon _i}){\left| {{\varepsilon _i}} \right|^{{\sigma _i}}}$ and ${a_i}{\mathop{\rm sgn}} ({\lambda _i}){\left| {{\lambda _i}} \right|^{{\mu _i}}}$ bring some advantages, namely larger error corresponds to smaller gain, but smaller error gives larger gain, which requires smaller control cost and reduces  overshoot caused by sudden change of reference signal. In addition, controllers design will enjoy more degree of freedom since new parameters are introduced.
  \item The theorems and corollaries realize tracking and compensate the nonlinearities of incommensurate FOS. If $\alpha_i=\alpha$ these methods change into the commensurate case. Furthermore, if $\alpha=1$, the above theories will be common solutions to integer order systems with nonsmooth nonlinearities. When reference signal equals zero $r=0$, the tracking task turns into a stabilizing task.
  \item Instead of fractional order derivative of the Lyapunov function \cite{Ding:2014CTA,Ding:2015NODY}, indirect Lyapunov method with frequency distributed model is adopted so that the order of the parameter update law is no longer fixed to the system order. Its design enjoys more degree of freedom and it is expected to achieve better control performance. By setting the order $\beta$ and $\gamma$ equal to the integer, the update law changes back.
\end{itemize}
\end{remark}

\section{Simulation Study}\label{Section 4}
Numerical simulations will be carried out in this section with the piecewise numerical approximation algorithm \cite{Wei:2014IJCAS}.
The specific form of controlled plant is chosen as
\begin{eqnarray}\label{Eq4-1}
\left\{ \begin{array}{l}
{{\mathscr D}^{0.5}}{x_1} = 2{x_2} - 0.5x_1^2 + {x_1}\theta ,\\
{{\mathscr D}^{0.6}}{x_2} = -{x_3} + \frac{{{x_2} - x_2^3}}{{1 + x_1^4}} - \sin \left( {{x_1}} \right)\theta ,\\
{{\mathscr D}^{0.7}}{x_3} = bu(v) - {{\rm{e}}^{ - x_1^2}}\sin \left( {5{x_3}} \right) + \cos \left( {{x_1}} \right)\theta + d,
\end{array} \right.
\end{eqnarray}
with the unknown constant $\theta=0.1$, bounded disturbance $d={\rm cos}(\pi t)+{\rm sin}(3t)$, possibly unknown control input coefficient $b=3$, the unknown parameters of nonsmooth nonlinearities $U_{up}=1.8$, $U_{low}=-1.5$, $b_{r}=0.8$, $b_{l}=-0.5$ and possibly unknown slope $m=1$. A sinusoidal signal $r(t)={\rm{sin}}(2t)$ is set to reference signal and will be tracked by the output of the system with initial state $x(0)=\left[~0.3,~-0.4,~~0.2~\right]^{\rm {T}}$. The FOTD could be configured as required, here $r_1=50, r_2=5$, by which the fractional order derivative of stabilizing function $\tau_i$ and reference signal $r$ , like ${\mathscr D}^{\alpha_i} \tau_{i-1}$ and ${{\mathscr D}^{\sum\nolimits_{j = 1}^{i - 1} {{\alpha _j}} }}r$, will be obtained easily. The $\rm{tanh}(\cdot)$ takes place of $\rm{sgn}(\cdot)$ in control law $v$ for smooth calculation and chattering rejection.

To demonstrate the applicability of FOABC method, the system (\ref{Eq4-1}) with or without the certainty of $b$ and $m$ will be both investigated, and results are presented in Example 1 and Example 2, respectively.

\vspace{10pt}
\noindent Example 1. Supposing the coefficient $\bar b$ of the system (\ref{Eq4-1}) is known, the case 1 adopts the method of Theorem \ref{Theorem 1}, the case 2 adopts the method of Corollary \ref{Corollary 1}, and the case 3 is only carried out with the general FOABC method \cite{Wei:2015NEU} where no solution is provided for nonsmooth nonlinearities. Selecting the controller parameters $c_1=c_2=c_3=a_1=a_2=a_3=4$, $\sigma_1=\sigma_2=\sigma_3=\mu_1=\mu_2=\mu_3=0.8$, $\rho = \beta=0.9$, $\xi = \Lambda=1$ and the initial parameter estimates $\hat D(0) = \hat \theta(0)=0$, then we view $\varepsilon(t) = r(t) - y(t)$ as tracking error and obtain the tracking performance shown in Fig. \ref{Fig 4}. Meanwhile, the Fig. \ref{Fig 5} and Fig. \ref{Fig 6} present the corresponding estimations $\hat \theta$ and $\hat D$, respectively. The actual control input is drawn in Fig. \ref{Fig 7}.
\begin{figure}[!htbp]
  \centering
  \includegraphics[width=0.8\hsize]{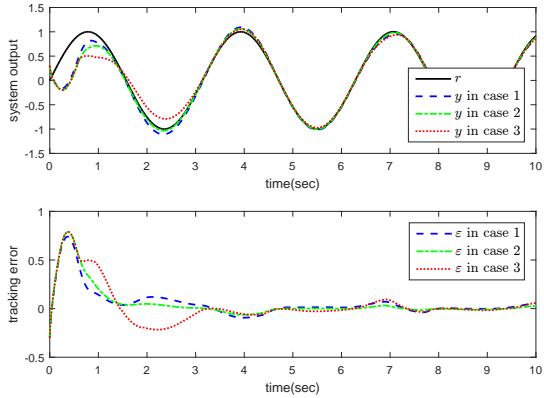}
  \caption{The performance of tracking in Example 1.}\label{Fig 4}
\end{figure}
\begin{figure}[!htbp]
  \centering
  \includegraphics[width=0.8\hsize]{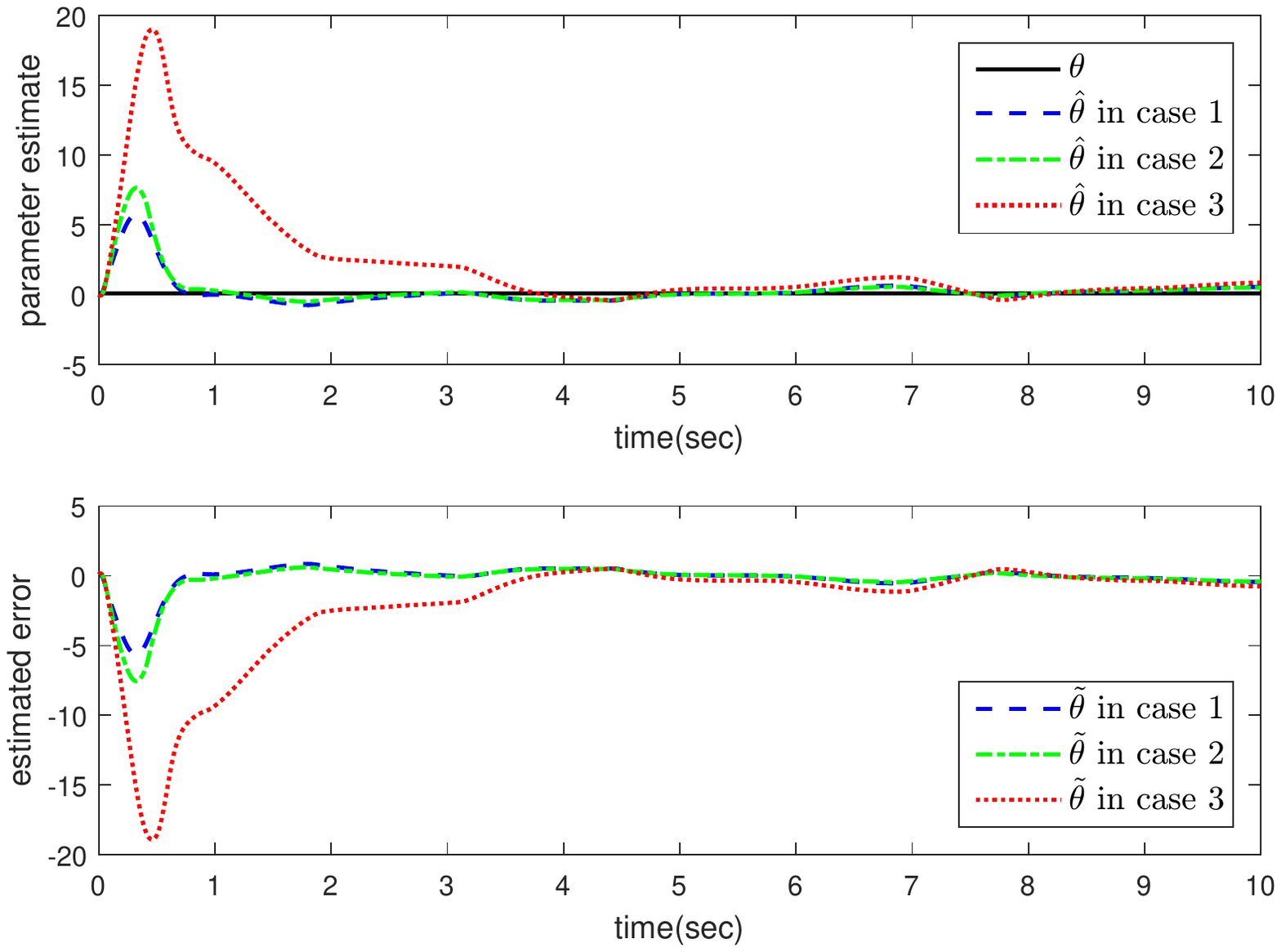}
  \caption{The estimation of $\theta$ in Example 1.}\label{Fig 5}
\end{figure}
\begin{figure}[!htbp]
  \centering
  \includegraphics[width=0.8\hsize]{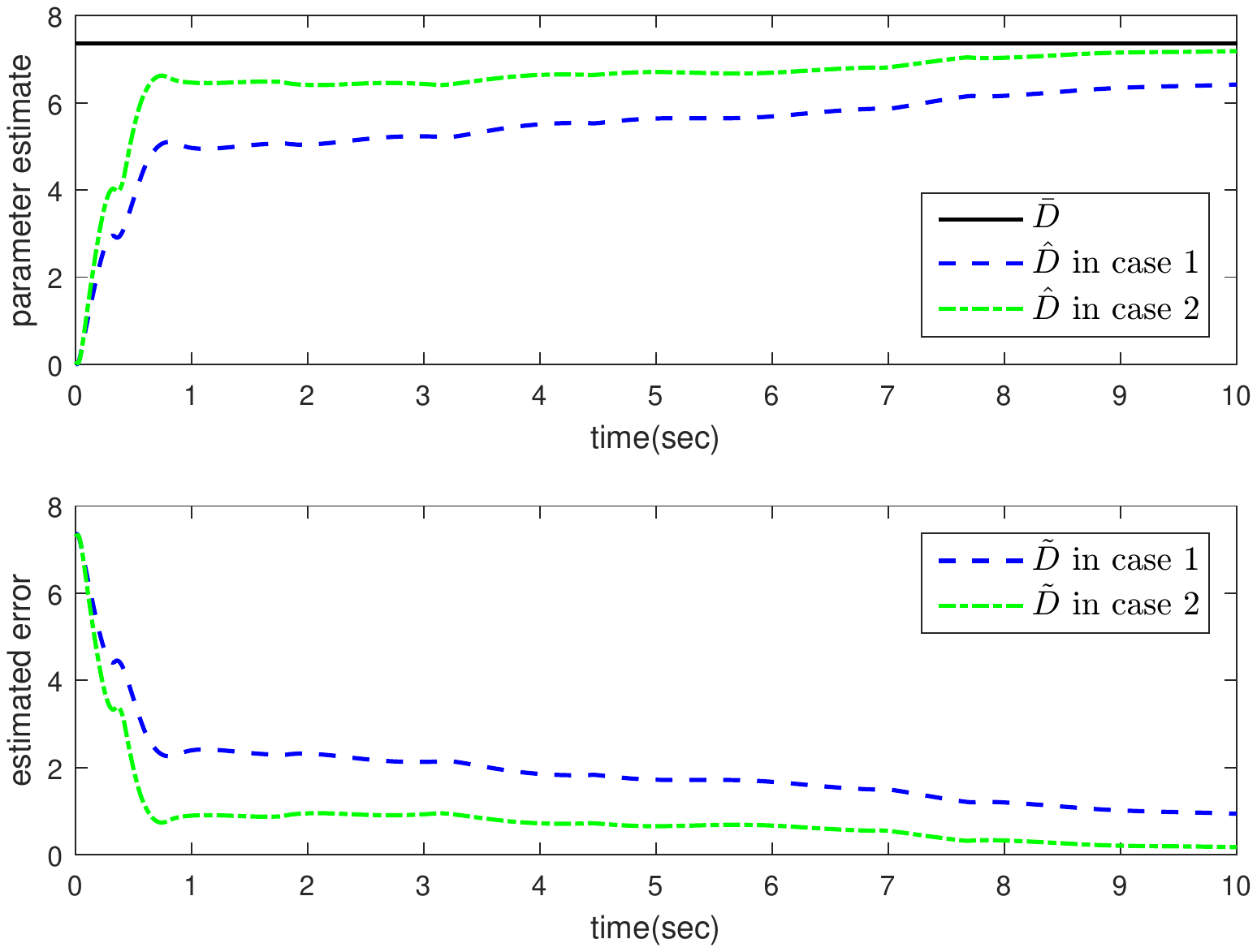}
  \caption{The relevant estimation of $\bar D$ in Example 1.}\label{Fig 6}
\end{figure}
\begin{figure}[!htbp]
  \centering
  \includegraphics[width=0.8\hsize]{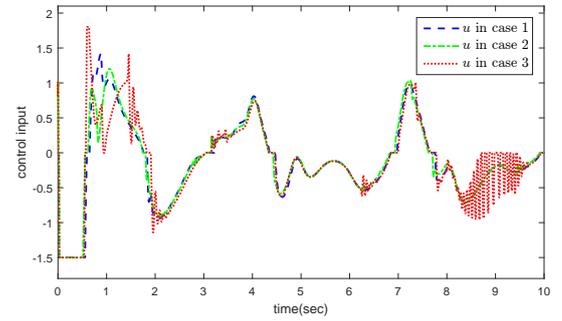}
  \caption{The needed control input in Example 1.}\label{Fig 7}
\end{figure}

The output $y(t)$, in view of the Fig. \ref{Fig 4}, could track the reference signal $r(t)$ for all three cases. However, compared with the case 2 and case 3, the case 1 shows better tracking result, because the inequality and restraint of coefficient make the Lyapunov function stricter and more conservative. Besides better tracking performance, the parameter estimation also shows the superiority. Although there is a larger tracking error at the beginning, the output of case 2 also converges to reference signal more quickly than that of case 3. And case 2 shows similar superiority of parameter estimation, too. Compared with the method of case 1, the controllers design of case 2 enjoys more degree of freedom, which may meet the different needs of practical project. For quantitative comparison, some details are calculated as Table \ref{Tab 1}.
\begin{table}[!ht]
\renewcommand\arraystretch{1.3}
\centering
\caption{The control performance of Example 1}\label{Tab 1}
\begin{tabular}{cccccc}
\hline
\hline
 & $|| {\varepsilon (t)} ||_{\rm {max}}$ & $|| {\varepsilon (t)} ||_2$ & $||\tilde \theta (t)||_2$ & $||\tilde D (t)||_2$ & $|| {u(t)} ||_2$ \\
\hline
case 1 & 0.73965 & 18.084 & 114.58 & 245.41 & 67.604 \\
case 2 & 0.78887 & 19.639 & 145.80 & 154.38 & 64.233 \\
case 3 & 0.78892 & 23.036 & 514.81 & $\backslash$ & 67.396 \\
\hline
\hline
\end{tabular}
\end{table}

Since the disturbance hasn't been considered in the controlled plant of general FOABC method \cite{Wei:2015NEU}, it is short of ability to estimate $\bar D$, which results in no data in Fig. \ref{Fig 6} and Table \ref{Tab 1}.

\vspace{0.3cm}
\noindent Example 2. Supposing the coefficient $\bar b$ of the system (\ref{Eq4-1}) is unknown, the Example 2 will be executed by the same parameters as Example 1 except $\gamma=0.9$, $\hat p(0)=0.01$ and $\eta  = 1$. Because $\hat p$ will be regarded as denominator during the calculation, the initial value $\hat p(0)$ cannot be equal to zero. The case 1 adopts the method of Theorem \ref{Theorem 2}. The case 2 adopts the method of Corollary \ref{Corollary 2}. The case 3 is only carried out with the general FOABC method \cite{Wei:2015NEU} which will be compared with our method shown in case 1 and case 2. The curves of output and tracking error are drawn on Fig. \ref{Fig 8}. As for estimation, Fig. \ref{Fig 9}, Fig. \ref{Fig 10} and  Fig. \ref{Fig 11} express the the parameter estimate of $\theta$, $p$ and $\bar D$, respectively. The synergetic control input is shown in Fig. \ref{Fig 12}.
\begin{figure}[!htbp]
  \centering
  \includegraphics[width=0.8\hsize]{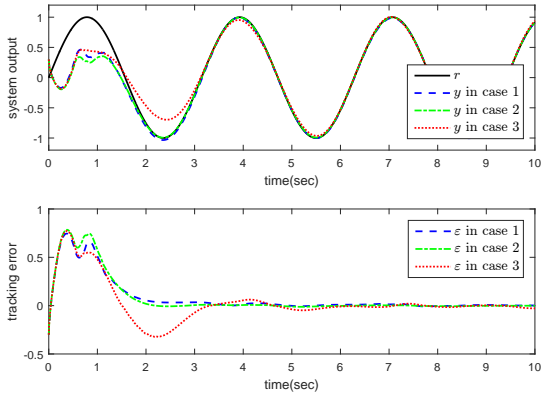}
  \caption{The performance of tracking in Example 2.}\label{Fig 8}
\end{figure}
\begin{figure}[!htbp]
  \centering
  \includegraphics[width=0.8\hsize]{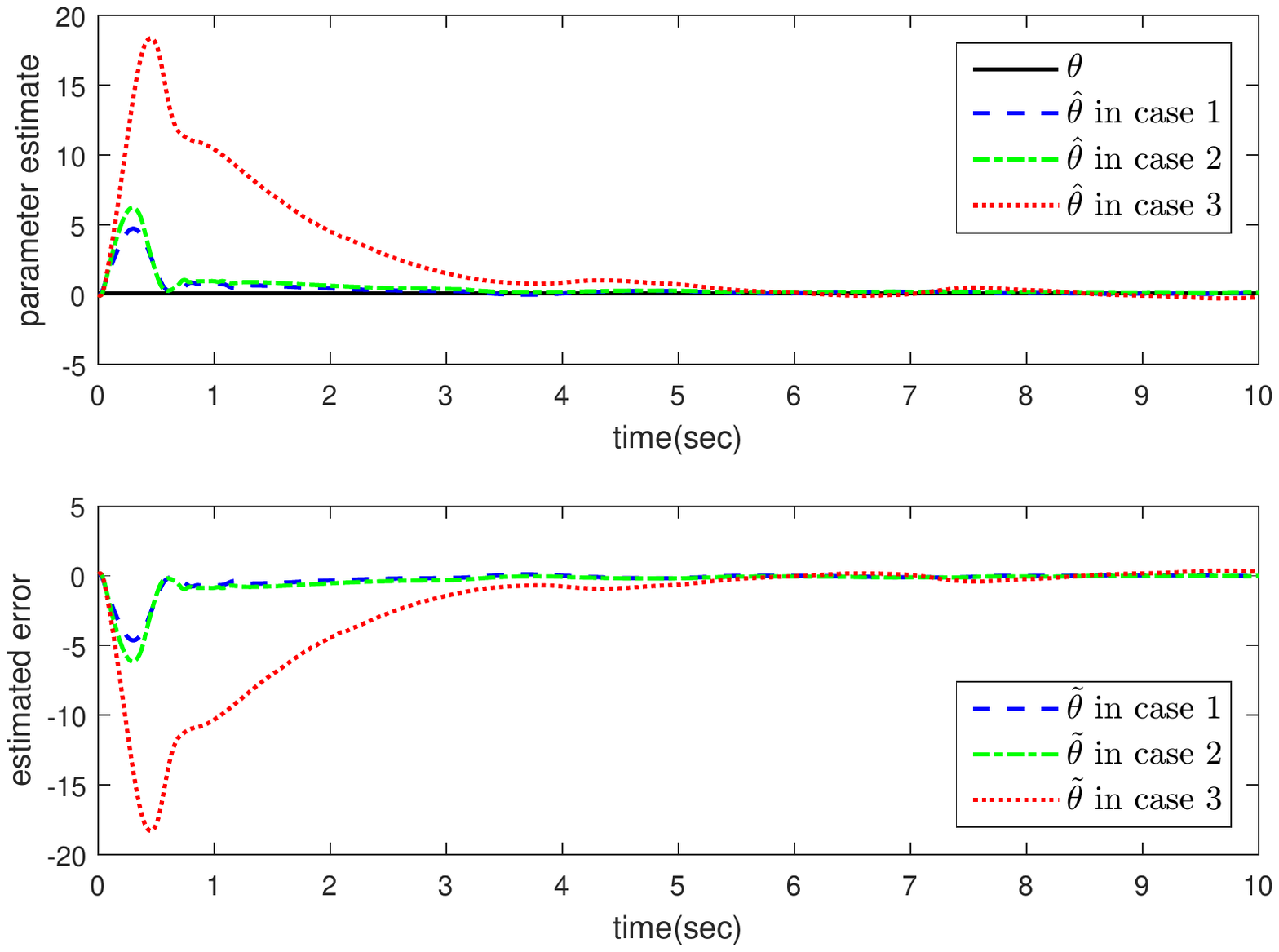}
  \caption{The estimation of $\theta$ in Example 2.}\label{Fig 9}
\end{figure}
\begin{figure}[!htbp]
  \centering
  \includegraphics[width=0.8\hsize]{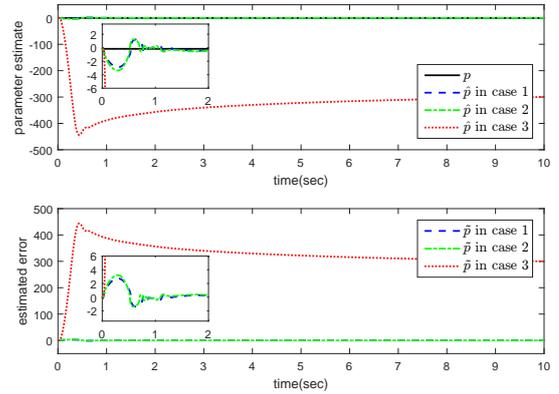}
  \caption{The relevant estimation of $p$ in Example 2.}\label{Fig 10}
\end{figure}
\begin{figure}[!htbp]
  \centering
  \includegraphics[width=0.8\hsize]{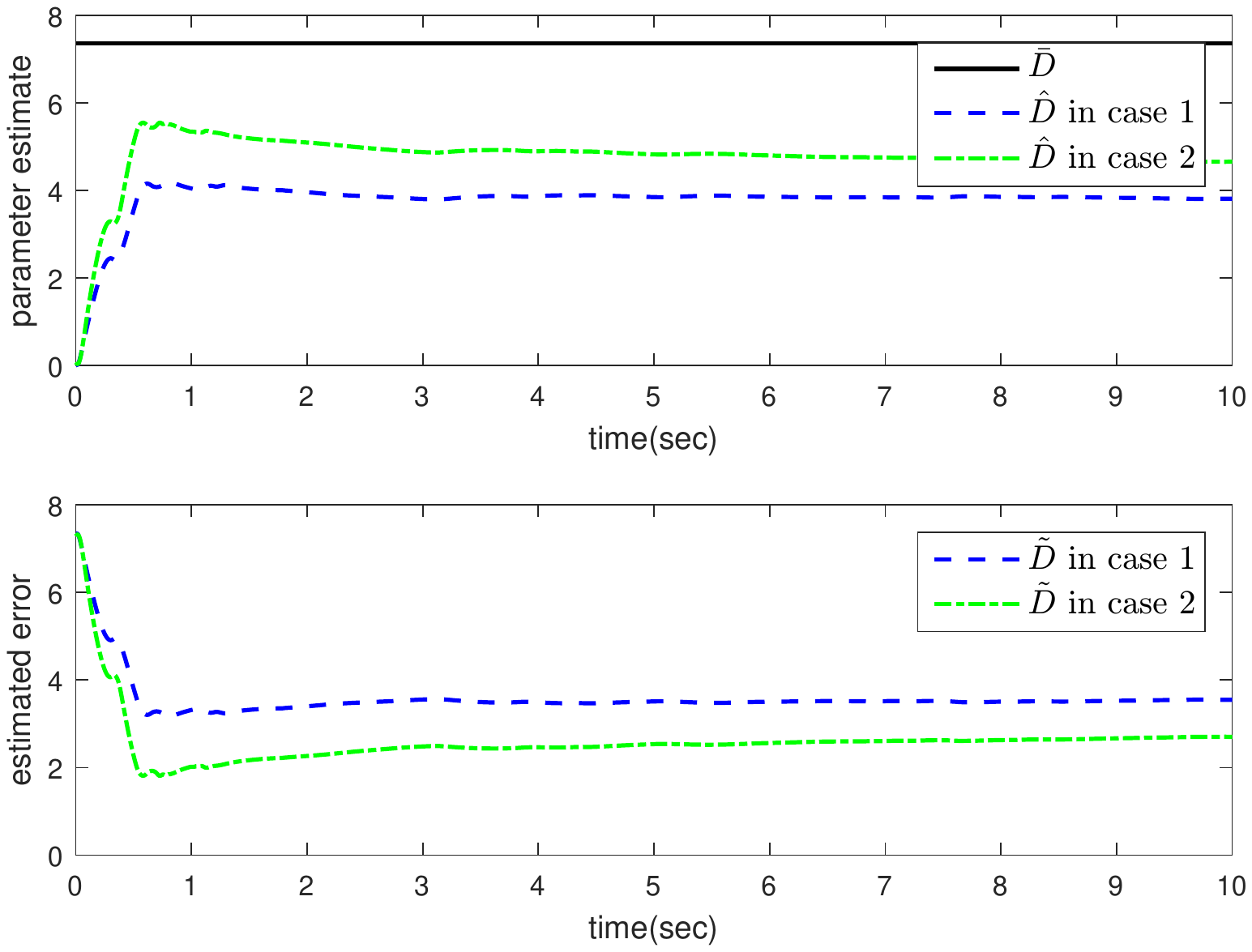}
  \caption{The relevant estimation of $\bar D$ in Example 2.}\label{Fig 11}
\end{figure}
\begin{figure}[!htbp]
  \centering
  \includegraphics[width=0.8\hsize]{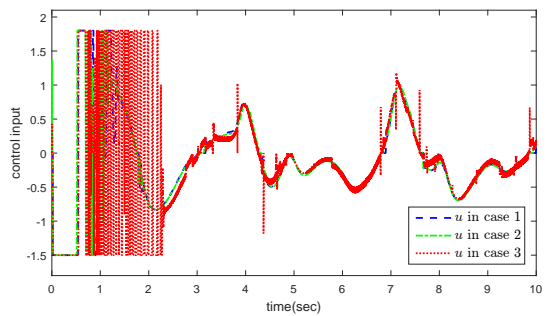}
  \caption{The needed control input in Example 2.}\label{Fig 12}
\end{figure}

The tracking performance in this example, analogous to Example 1, exposes the high speed of our methods that output could keep pace with the reference signal rapidly. What's more, our estimating procedure illustrates that the methods we suggest have excellent advantage. The case 1 still shows its superiority for the same reason as Example 1. The controllers of case 2 also enjoys more freedom. Some details are also calculated as Table \ref{Tab 2}.
\begin{table*}[hbt]
\renewcommand\arraystretch{1.3}
\centering
\caption{The control performance of Example 2}\label{Tab 2}
\begin{tabular}{ccccccc}
\hline
\hline
  & $|| {\varepsilon (t)} ||_{\rm {max}}$ & $|| {\varepsilon (t)} ||_2$ & $|| {\tilde \theta (t)} ||_2$ & $|| {\tilde p (t)} ||_2$ & $|| {\tilde D (t)} ||_2$ & $|| {u(t)} ||_2$ \\
\hline
case 1 & 0.74547 & 39.939 & 116.67 & 209.591 & 2193.7 & 227.30 \\
case 2 & 0.78489 & 43.165 & 161.70 & 205.557 & 1599.4 & 225.35 \\
case 3 & 0.77238 & 55.456 & 1007.3 & 199644 & $\backslash$ & 230.88 \\
\hline
\hline
\end{tabular}
\end{table*}

As can be observed from Table \ref{Tab 1} and Table \ref{Tab 2}, compared with the system with known coefficient, the unknown case needs more control cost in the same circumstances of simulation, but only gets barely satisfactory tracking performance. Compared with the case 1, the case 2 requires less control cost because of the nonlinear feedback element.

The Example 2 proceeds under the condition of unknown $\bar b$. Because of $\bar b = \delta_n b m$, the $b$ or $m$ is uncertain, too. That is to say, our proposed method still works well without prior knowledge of all parameters of nonsmooth nonlinearities, namely unknown $m$, $U_{up}$, $U_{low}$, $b_r$ and $b_l$. Therefore, it leads to higher versatility and wider application.

\section{Conclusions}\label{Section 5}

The input saturation and dead zone of uncertain nonlinear FOS are investigated in this paper. After the decomposition of control input, the problem of input saturation and dead zone is transformed into the problem of input saturation and bounded disturbance, which could be solved in accordance with FOABC. It is the first time that scholars have studied the FOS with input saturation and dead zone at the same time. What's more, our proposed method could still work even if all parameters of these nonsmooth nonlinearities are unknown. It is believed that this novel method provides a new way to cope with control input subject to saturation and dead zone.

\section*{Acknowledgements}
\phantomsection
\addcontentsline{toc}{section}{Acknowledgements}
The work described in this paper was fully supported by the National Natural Science Foundation of China (No. 61573332, No. 61601431), the Fundamental Research Funds for the Central Universities (No. WK2100100028), the Anhui Provincial Natural Science Foundation (No. 1708085QF141) and the General Financial Grant from the China Postdoctoral Science Foundation (No. 2016M602032).

\section*{References}
\phantomsection
\addcontentsline{toc}{section}{References}
\bibliographystyle{model3-num-names}
\bibliography{database}

\begin{thebibliography}{47}
\providecommand{\natexlab}[1]{#1}
\providecommand{\url}[1]{\texttt{#1}}
\providecommand{\href}[2]{#2}
\providecommand{\path}[1]{#1}
\providecommand{\eprint}[1]{\href{http://arxiv.org/abs/#1}{\path{#1}}}
\providecommand{\DOIprefix}{doi:}
\providecommand{\ArXivprefix}{arXiv:}
\providecommand{\URLprefix}{URL: }
\providecommand{\Pubmedprefix}{pmid:}
\providecommand{\doi}[1]{\href{http://dx.doi.org/#1}{\path{#1}}}
\providecommand{\Pubmed}[1]{\href{pmid:#1}{\path{#1}}}
\providecommand{\BIBand}{and}
\providecommand{\bibinfo}[2]{#2}
\ifx\xfnm\undefined \def\xfnm[#1]{\unskip,\space#1}\fi
\bibitem[{Zhou and Wen(2008)}]{Zhou:2008Book}
\bibinfo{author}{Zhou\xfnm[ J.]}, \bibinfo{author}{Wen\xfnm[ C.Y.]}.
\newblock \bibinfo{title}{Adaptive Backstepping Control of Uncertain Systems:
  Nonsmooth Nonlinearities, Interactions or Time-variations}.
\newblock \bibinfo{address}{Berlin}: \bibinfo{publisher}{Springer};
  \bibinfo{year}{2008}.
\bibitem[{Zuo et~al.(2015)Zuo, Li and Shi}]{Zuo:2015ISAT}
\bibinfo{author}{Zuo\xfnm[ Z.Y.]}, \bibinfo{author}{Li\xfnm[ X.]},
  \bibinfo{author}{Shi\xfnm[ Z.G.]}.
\newblock \bibinfo{title}{$l_1$ adaptive control of uncertain gear transmission
  servo systems with deadzone nonlinearity}.
\newblock \bibinfo{journal}{ISA Transactions}
  \bibinfo{year}{2015};\bibinfo{volume}{58}:\bibinfo{pages}{67--75}.
\bibitem[{Zuo et~al.(2016)Zuo, Ju and Ding}]{Zuo:2016TCST}
\bibinfo{author}{Zuo\xfnm[ Z.Y.]}, \bibinfo{author}{Ju\xfnm[ X.L.]},
  \bibinfo{author}{Ding\xfnm[ Z.T.]}.
\newblock \bibinfo{title}{Control of gear transmission servo systems with
  asymmetric deadzone nonlinearity}.
\newblock \bibinfo{journal}{IEEE Transactions on Control System Technology}
  \bibinfo{year}{2016};\bibinfo{volume}{24}(\bibinfo{number}{4}):\bibinfo{pages}{1472--1479}.
\bibitem[{Xu et~al.(2016)Xu, Su, Zhang and Lu}]{Xu:2016ISAT}
\bibinfo{author}{Xu\xfnm[ B.]}, \bibinfo{author}{Su\xfnm[ Q.]},
  \bibinfo{author}{Zhang\xfnm[ J.H.]}, \bibinfo{author}{Lu\xfnm[ Z.Y.]}.
\newblock \bibinfo{title}{Analysis and compensation for the cascade dead-zones
  in the proportional control valve}.
\newblock \bibinfo{journal}{ISA Transactions}
  \bibinfo{year}{2016};\bibinfo{note}{Doi: 10.1016/j.isatra.2016.10.012}.
\bibitem[{Liu et~al.(2013)Liu, Liu and Chen}]{Liu:2013NODY}
\bibinfo{author}{Liu\xfnm[ L.]}, \bibinfo{author}{Liu\xfnm[ Y.J.]},
  \bibinfo{author}{Chen\xfnm[ C.L.P.]}.
\newblock \bibinfo{title}{Adaptive neural network control for a {DC} motor
  system with dead-zone}.
\newblock \bibinfo{journal}{Nonlinear Dynamics}
  \bibinfo{year}{2013};\bibinfo{volume}{72}(\bibinfo{number}{1}):\bibinfo{pages}{141--147}.
\bibitem[{Jiang et~al.(2015)Jiang, Liu, Chen and Zhang}]{Jiang:2015NODY}
\bibinfo{author}{Jiang\xfnm[ Y.M.]}, \bibinfo{author}{Liu\xfnm[ Z.]},
  \bibinfo{author}{Chen\xfnm[ C.]}, \bibinfo{author}{Zhang\xfnm[ Y.]}.
\newblock \bibinfo{title}{Adaptive robust fuzzy control for dual arm robot with
  unknown input deadzone nonlinearity}.
\newblock \bibinfo{journal}{Nonlinear Dynamics}
  \bibinfo{year}{2015};\bibinfo{volume}{81}(\bibinfo{number}{3}):\bibinfo{pages}{1301--1314}.
\bibitem[{Xu(2015)}]{Xu:2015NODY}
\bibinfo{author}{Xu\xfnm[ B.]}.
\newblock \bibinfo{title}{Robust adaptive neural control of flexible hypersonic
  flight vehicle with dead-zone input nonlinearity}.
\newblock \bibinfo{journal}{Nonlinear Dynamics}
  \bibinfo{year}{2015};\bibinfo{volume}{80}(\bibinfo{number}{3}):\bibinfo{pages}{1509--1520}.
\bibitem[{Carnevale and Astolfi(2014)}]{Carnevale:2014TAC}
\bibinfo{author}{Carnevale\xfnm[ D.]}, \bibinfo{author}{Astolfi\xfnm[ A.]}.
\newblock \bibinfo{title}{Semi-global multi-frequency estimation in the
  presence of deadzone and saturation}.
\newblock \bibinfo{journal}{IEEE Transactions on Automatic Control}
  \bibinfo{year}{2014};\bibinfo{volume}{59}(\bibinfo{number}{7}):\bibinfo{pages}{1913--1918}.
\bibitem[{Shen et~al.(2016)Shen, Shi, Shi and Zhang}]{Shen:2016TIE}
\bibinfo{author}{Shen\xfnm[ Q.K.]}, \bibinfo{author}{Shi\xfnm[ P.]},
  \bibinfo{author}{Shi\xfnm[ Y.]}, \bibinfo{author}{Zhang\xfnm[ J.H.]}.
\newblock \bibinfo{title}{Adaptive output consensus with saturation and
  dead-zone and its application}.
\newblock \bibinfo{journal}{IEEE Transactions on Industrial Electronics}
  \bibinfo{year}{2016};\bibinfo{note}{Doi: 10.1109/TIE.2016.2587858}.
\bibitem[{Chen et~al.(2012{\natexlab{a}})Chen, Wang and Ding}]{Chen:2012AMM}
\bibinfo{author}{Chen\xfnm[ J.]}, \bibinfo{author}{Wang\xfnm[ X.P.]},
  \bibinfo{author}{Ding\xfnm[ R.F.]}.
\newblock \bibinfo{title}{Gradient based estimation algorithm for {H}ammerstein
  systems with saturation and dead-zone nonlinearities}.
\newblock \bibinfo{journal}{Applied Mathematical Modelling}
  \bibinfo{year}{2012}{\natexlab{a}};\bibinfo{volume}{36}(\bibinfo{number}{1}):\bibinfo{pages}{238--243}.
\bibitem[{Chen et~al.(2012{\natexlab{b}})Chen, Lu and Ding}]{Chen:2012JVC}
\bibinfo{author}{Chen\xfnm[ J.]}, \bibinfo{author}{Lu\xfnm[ X.L.]},
  \bibinfo{author}{Ding\xfnm[ R.F.]}.
\newblock \bibinfo{title}{Gradient-based iterative algorithm for {W}iener
  systems with saturation and dead-zone nonlinearities}.
\newblock \bibinfo{journal}{Journal of Vibration and Control}
  \bibinfo{year}{2012}{\natexlab{b}};\bibinfo{volume}{20}(\bibinfo{number}{4}):\bibinfo{pages}{634--640}.
\bibitem[{Chen et~al.(2016{\natexlab{a}})Chen, Wei, Liang and
  Wang}]{Chen:2016CNSNS}
\bibinfo{author}{Chen\xfnm[ Y.Q.]}, \bibinfo{author}{Wei\xfnm[ Y.H.]},
  \bibinfo{author}{Liang\xfnm[ S.]}, \bibinfo{author}{Wang\xfnm[ Y.]}.
\newblock \bibinfo{title}{Indirect model reference adaptive control for a class
  of fractional order systems}.
\newblock \bibinfo{journal}{Communications in Nonlinear Science and Numerical
  Simulation}
  \bibinfo{year}{2016}{\natexlab{a}};\bibinfo{volume}{39}:\bibinfo{pages}{458--471}.
\bibitem[{Du et~al.(2016)Du, Wei, Liang and Wang}]{Du:2016NODY}
\bibinfo{author}{Du\xfnm[ B.]}, \bibinfo{author}{Wei\xfnm[ Y.H.]},
  \bibinfo{author}{Liang\xfnm[ S.]}, \bibinfo{author}{Wang\xfnm[ Y.]}.
\newblock \bibinfo{title}{Estimation of exact initial states of fractional
  order systems}.
\newblock \bibinfo{journal}{Nonlinear Dynamics}
  \bibinfo{year}{2016};\bibinfo{volume}{86}(\bibinfo{number}{3}):\bibinfo{pages}{2061--2070}.
\bibitem[{Yin et~al.(2014)Yin, Chen and Zhong}]{Yin:2014Automatica}
\bibinfo{author}{Yin\xfnm[ C.]}, \bibinfo{author}{Chen\xfnm[ Y.Q.]},
  \bibinfo{author}{Zhong\xfnm[ S.M.]}.
\newblock \bibinfo{title}{Fractional-order sliding mode based extremum seeking
  control of a class of nonlinear systems}.
\newblock \bibinfo{journal}{Automatica}
  \bibinfo{year}{2014};\bibinfo{volume}{50}(\bibinfo{number}{12}):\bibinfo{pages}{3173--3181}.
\bibitem[{Chen et~al.(2016{\natexlab{b}})Chen, Wei, Zhong and
  Wang}]{Chen:2016NODY}
\bibinfo{author}{Chen\xfnm[ Y.Q.]}, \bibinfo{author}{Wei\xfnm[ Y.H.]},
  \bibinfo{author}{Zhong\xfnm[ H.]}, \bibinfo{author}{Wang\xfnm[ Y.]}.
\newblock \bibinfo{title}{Sliding mode control with a second-order switching
  law for a class of nonlinear fractional order systems}.
\newblock \bibinfo{journal}{Nonlinear Dynamics}
  \bibinfo{year}{2016}{\natexlab{b}};\bibinfo{volume}{85}(\bibinfo{number}{1}):\bibinfo{pages}{633--643}.
\bibitem[{Li et~al.(2009)Li, Chen and Podlubny}]{Li:2009Automatica}
\bibinfo{author}{Li\xfnm[ Y.]}, \bibinfo{author}{Chen\xfnm[ Y.Q.]},
  \bibinfo{author}{Podlubny\xfnm[ I.]}.
\newblock \bibinfo{title}{{Mittag-Leffler} stability of fractional order
  nonlinear dynamic systems}.
\newblock \bibinfo{journal}{Automatica}
  \bibinfo{year}{2009};\bibinfo{volume}{45}(\bibinfo{number}{8}):\bibinfo{pages}{1965--1969}.
\bibitem[{Lu and Chen(2010)}]{Lu:2010TAC}
\bibinfo{author}{Lu\xfnm[ J.G.]}, \bibinfo{author}{Chen\xfnm[ Y.Q.]}.
\newblock \bibinfo{title}{Robust stability and stabilization of
  fractional-order interval systems with the fractional order $\alpha$: the $0<
  \alpha <1$ case}.
\newblock \bibinfo{journal}{IEEE Transactions on Automatic Control}
  \bibinfo{year}{2010};\bibinfo{volume}{55}(\bibinfo{number}{1}):\bibinfo{pages}{152--158}.
\bibitem[{Li and Wang(2012)}]{Li:2012JFI}
\bibinfo{author}{Li\xfnm[ C.]}, \bibinfo{author}{Wang\xfnm[ J.C.]}.
\newblock \bibinfo{title}{Robust stability and stabilization of fractional
  order interval systems with coupling relationships: the $0< \alpha < 1$
  case}.
\newblock \bibinfo{journal}{Journal of the Franklin Institute}
  \bibinfo{year}{2012};\bibinfo{volume}{349}(\bibinfo{number}{7}):\bibinfo{pages}{2406--2419}.
\bibitem[{Shen and Lam(2016)}]{Shen:2015TAC}
\bibinfo{author}{Shen\xfnm[ J.]}, \bibinfo{author}{Lam\xfnm[ J.]}.
\newblock \bibinfo{title}{Stability and performance analysis for positive
  fractional-order systems with time-varying delays}.
\newblock \bibinfo{journal}{IEEE Transactions on Automatic Control}
  \bibinfo{year}{2016};\bibinfo{volume}{61}(\bibinfo{number}{9}):\bibinfo{pages}{2676--2681}.
\bibitem[{Yan(2011)}]{Yan:2011JFI}
\bibinfo{author}{Yan\xfnm[ Z.M.]}.
\newblock \bibinfo{title}{Controllability of fractional-order partial neutral
  functional integrodifferential inclusions with infinite delay}.
\newblock \bibinfo{journal}{Journal of the Franklin Institute}
  \bibinfo{year}{2011};\bibinfo{volume}{348}(\bibinfo{number}{8}):\bibinfo{pages}{2156--2173}.
\bibitem[{Acharya et~al.(2014)Acharya, Das, Pan and Das}]{Acharya:2014SP}
\bibinfo{author}{Acharya\xfnm[ A.]}, \bibinfo{author}{Das\xfnm[ S.]},
  \bibinfo{author}{Pan\xfnm[ I.]}, \bibinfo{author}{Das\xfnm[ S.]}.
\newblock \bibinfo{title}{Extending the concept of analog \protect{Butterworth}
  filter for fractional order systems}.
\newblock \bibinfo{journal}{Signal Processing}
  \bibinfo{year}{2014};\bibinfo{volume}{94}:\bibinfo{pages}{409--420}.
\bibitem[{Wei et~al.(2016{\natexlab{a}})Wei, Tse, Du and Wang}]{Wei:2016ISAT}
\bibinfo{author}{Wei\xfnm[ Y.H.]}, \bibinfo{author}{Tse\xfnm[ P.W.]},
  \bibinfo{author}{Du\xfnm[ B.]}, \bibinfo{author}{Wang\xfnm[ Y.]}.
\newblock \bibinfo{title}{An innovative fixed-pole numerical approximation for
  fractional order systems}.
\newblock \bibinfo{journal}{ISA Transactions}
  \bibinfo{year}{2016}{\natexlab{a}};\bibinfo{volume}{62}:\bibinfo{pages}{94--102}.
\bibitem[{Wei et~al.(2014)Wei, Gao, Peng and Wang}]{Wei:2014IJCAS}
\bibinfo{author}{Wei\xfnm[ Y.H.]}, \bibinfo{author}{Gao\xfnm[ Q.]},
  \bibinfo{author}{Peng\xfnm[ C.]}, \bibinfo{author}{Wang\xfnm[ Y.]}.
\newblock \bibinfo{title}{A rational approximate method to fractional order
  systems}.
\newblock \bibinfo{journal}{International Journal of Control Automation \&
  Systems}
  \bibinfo{year}{2014};\bibinfo{volume}{12}(\bibinfo{number}{6}):\bibinfo{pages}{1180--1186}.
\bibitem[{Sheng et~al.(2011)Sheng, Li and Chen}]{Sheng:2011JFI}
\bibinfo{author}{Sheng\xfnm[ H.]}, \bibinfo{author}{Li\xfnm[ Y.]},
  \bibinfo{author}{Chen\xfnm[ Y.Q.]}.
\newblock \bibinfo{title}{Application of numerical inverse laplace transform
  algorithms in fractional calculus}.
\newblock \bibinfo{journal}{Journal of the Franklin Institute}
  \bibinfo{year}{2011};\bibinfo{volume}{348}(\bibinfo{number}{2}):\bibinfo{pages}{315--330}.
\bibitem[{Petr\'a\v{s} et~al.(2012)Petr\'a\v{s}, Sierociuk and
  Podlubny}]{Petras:2012TSP}
\bibinfo{author}{Petr\'a\v{s}\xfnm[ I.]}, \bibinfo{author}{Sierociuk\xfnm[
  D.]}, \bibinfo{author}{Podlubny\xfnm[ I.]}.
\newblock \bibinfo{title}{Identification of parameters of a half-order system}.
\newblock \bibinfo{journal}{IEEE Transactions on Signal Processing}
  \bibinfo{year}{2012};\bibinfo{volume}{60}(\bibinfo{number}{10}):\bibinfo{pages}{5561--5566}.
\bibitem[{Hu et~al.(2016)Hu, Fan, Wei, Wang and Liang}]{Hu:2016IJSS}
\bibinfo{author}{Hu\xfnm[ Y.S.]}, \bibinfo{author}{Fan\xfnm[ Y.]},
  \bibinfo{author}{Wei\xfnm[ Y.H.]}, \bibinfo{author}{Wang\xfnm[ Y.]},
  \bibinfo{author}{Liang\xfnm[ Q.]}.
\newblock \bibinfo{title}{Subspace-based continuous-time identification of
  fractional order systems from non-uniformly sampled data}.
\newblock \bibinfo{journal}{International Journal of Systems Science}
  \bibinfo{year}{2016};\bibinfo{volume}{47}(\bibinfo{number}{1}):\bibinfo{pages}{122--134}.
\bibitem[{Luo and Chen(2012)}]{Luo:2012Automatica}
\bibinfo{author}{Luo\xfnm[ Y.]}, \bibinfo{author}{Chen\xfnm[ Y.Q.]}.
\newblock \bibinfo{title}{Stabilizing and robust fractional order \protect{PI}
  controller synthesis for first order plus time delay systems}.
\newblock \bibinfo{journal}{Automatica}
  \bibinfo{year}{2012};\bibinfo{volume}{48}(\bibinfo{number}{9}):\bibinfo{pages}{2159--2167}.
\bibitem[{Machado(1997)}]{Machado:1997SAMS}
\bibinfo{author}{Machado\xfnm[ J.A.T.]}.
\newblock \bibinfo{title}{Analysis and design of fractional-order digital
  control systems}.
\newblock \bibinfo{journal}{Systems Analysis Modelling Simulation}
  \bibinfo{year}{1997};\bibinfo{volume}{27}:\bibinfo{pages}{107--122}.
\bibitem[{Lim et~al.(2013)Lim, Oh and Ahn}]{Lim:2013TAC}
\bibinfo{author}{Lim\xfnm[ Y.H.]}, \bibinfo{author}{Oh\xfnm[ K.K.]},
  \bibinfo{author}{Ahn\xfnm[ H.S.]}.
\newblock \bibinfo{title}{Stability and stabilization of fractional-order
  linear systems subject to input saturation}.
\newblock \bibinfo{journal}{IEEE Transactions on Automatic Control}
  \bibinfo{year}{2013};\bibinfo{volume}{58}(\bibinfo{number}{4}):\bibinfo{pages}{1062--1067}.
\bibitem[{Shahri et~al.(2015)Shahri, Alfi and Machado}]{Shahri:2015AML}
\bibinfo{author}{Shahri\xfnm[ E.S.A.]}, \bibinfo{author}{Alfi\xfnm[ A.]},
  \bibinfo{author}{Machado\xfnm[ J.A.T.]}.
\newblock \bibinfo{title}{An extension of estimation of domain of attraction
  for fractional order linear system subject to saturation control}.
\newblock \bibinfo{journal}{Applied Mathematics Letters}
  \bibinfo{year}{2015};\bibinfo{volume}{47}:\bibinfo{pages}{26--34}.
\bibitem[{Shahri and Balochian(2015)}]{Shahri:2015IJAC}
\bibinfo{author}{Shahri\xfnm[ E.S.A.]}, \bibinfo{author}{Balochian\xfnm[ S.]}.
\newblock \bibinfo{title}{Analysis of fractional-order linear systems with
  saturation using \protect{L}yapunov's second method and convex optimization}.
\newblock \bibinfo{journal}{International Journal of Automation \& Computing}
  \bibinfo{year}{2015};\bibinfo{volume}{12}(\bibinfo{number}{4}):\bibinfo{pages}{440--447}.
\bibitem[{Luo(2014)}]{Luo:2014MPE}
\bibinfo{author}{Luo\xfnm[ J.H.]}.
\newblock \bibinfo{title}{State-feedback control for fractional-order nonlinear
  systems subject to input saturation}.
\newblock \bibinfo{journal}{Mathematical Problems in Engineering}
  \bibinfo{year}{2014};\bibinfo{note}{DOI: 10.1155/2014/891639}.
\bibitem[{Abooee and Haeri(2013)}]{Abooee:2013CTA}
\bibinfo{author}{Abooee\xfnm[ A.]}, \bibinfo{author}{Haeri\xfnm[ M.]}.
\newblock \bibinfo{title}{Stabilisation of commensurate fractional-order
  polytopic non-linear differential inclusion subject to input non-linearity
  and unknown disturbances}.
\newblock \bibinfo{journal}{IET Control Theory \& Applications}
  \bibinfo{year}{2013};\bibinfo{volume}{7}(\bibinfo{number}{12}):\bibinfo{pages}{1624--1633}.
\bibitem[{Tian and Fei(2014)}]{Tian:2014Entropy}
\bibinfo{author}{Tian\xfnm[ X.M.]}, \bibinfo{author}{Fei\xfnm[ S.M.]}.
\newblock \bibinfo{title}{Robust control of a class of uncertain
  fractional-order chaotic systems with input nonlinearity via an adaptive
  sliding mode technique}.
\newblock \bibinfo{journal}{Entropy}
  \bibinfo{year}{2014};\bibinfo{volume}{16}(\bibinfo{number}{2}):\bibinfo{pages}{729--746}.
\bibitem[{Roohi et~al.(2015)Roohi, Aghababa and
  Haghighi}]{Roohi:2015Complexity}
\bibinfo{author}{Roohi\xfnm[ M.]}, \bibinfo{author}{Aghababa\xfnm[ M.P.]},
  \bibinfo{author}{Haghighi\xfnm[ A.R.]}.
\newblock \bibinfo{title}{Switching adaptive controllers to control
  fractional-order complex systems with unknown structure and input
  nonlinearities}.
\newblock \bibinfo{journal}{Complexity}
  \bibinfo{year}{2015};\bibinfo{volume}{21}(\bibinfo{number}{2}):\bibinfo{pages}{211--223}.
\bibitem[{Tian and Fei(2015)}]{Tian:2015JCND}
\bibinfo{author}{Tian\xfnm[ X.M.]}, \bibinfo{author}{Fei\xfnm[ S.M.]}.
\newblock \bibinfo{title}{Adaptive control for fractional-order
  micro-electro-mechanical resonator with nonsymmetric dead-zone input}.
\newblock \bibinfo{journal}{Journal of Computational and Nonlinear Dynamics}
  \bibinfo{year}{2015};\bibinfo{volume}{10}(\bibinfo{number}{6}):\bibinfo{pages}{1--6}.
\bibitem[{Wei et~al.(2015{\natexlab{a}})Wei, Chen, Liang and
  Wang}]{Wei:2015NEU}
\bibinfo{author}{Wei\xfnm[ Y.H.]}, \bibinfo{author}{Chen\xfnm[ Y.Q.]},
  \bibinfo{author}{Liang\xfnm[ S.]}, \bibinfo{author}{Wang\xfnm[ Y.]}.
\newblock \bibinfo{title}{A novel algorithm on adaptive backstepping control of
  fractional order systems}.
\newblock \bibinfo{journal}{Neurocomputing}
  \bibinfo{year}{2015}{\natexlab{a}};\bibinfo{volume}{165}:\bibinfo{pages}{395--402}.
\bibitem[{Wei et~al.(2016{\natexlab{b}})Wei, Peter, Yao and
  Wang}]{Wei:2016NODY}
\bibinfo{author}{Wei\xfnm[ Y.H.]}, \bibinfo{author}{Peter\xfnm[ W.T.]},
  \bibinfo{author}{Yao\xfnm[ Z.]}, \bibinfo{author}{Wang\xfnm[ Y.]}.
\newblock \bibinfo{title}{Adaptive backstepping output feedback control for a
  class of nonlinear fractional order systems}.
\newblock \bibinfo{journal}{Nonlinear Dynamics}
  \bibinfo{year}{2016}{\natexlab{b}};\bibinfo{volume}{86}(\bibinfo{number}{2}):\bibinfo{pages}{1047--1056}.
\bibitem[{Ding et~al.(2014)Ding, Qi and Wang}]{Ding:2014CTA}
\bibinfo{author}{Ding\xfnm[ D.S.]}, \bibinfo{author}{Qi\xfnm[ D.L.]},
  \bibinfo{author}{Wang\xfnm[ Q.]}.
\newblock \bibinfo{title}{Non-linear {Mittag-Leffler} stabilisation of
  commensurate fractional-order non-linear systems}.
\newblock \bibinfo{journal}{IET Control Theory \& Applications}
  \bibinfo{year}{2014};\bibinfo{volume}{9}(\bibinfo{number}{5}):\bibinfo{pages}{681--690}.
\bibitem[{Ding et~al.(2015)Ding, Qi, Peng and Wang}]{Ding:2015NODY}
\bibinfo{author}{Ding\xfnm[ D.S.]}, \bibinfo{author}{Qi\xfnm[ D.L.]},
  \bibinfo{author}{Peng\xfnm[ J.M.]}, \bibinfo{author}{Wang\xfnm[ Q.]}.
\newblock \bibinfo{title}{Asymptotic pseudo-state stabilization of commensurate
  fractional-order nonlinear systems with additive disturbance}.
\newblock \bibinfo{journal}{Nonlinear Dynamics}
  \bibinfo{year}{2015};\bibinfo{volume}{81}(\bibinfo{number}{1}):\bibinfo{pages}{667--677}.
\bibitem[{Montseny(1998)}]{Montseny:1998LAAS}
\bibinfo{author}{Montseny\xfnm[ G.]}.
\newblock \bibinfo{title}{Diffusive representation of pseudo-differential
  time-operators}.
\newblock In: \bibinfo{booktitle}{European Series in Applied and Industrial
  Mathematics-Fractional Differential Systems Models Methods and Applications}.
  \bibinfo{address}{Paris, France}; \bibinfo{year}{1998}, p.
  \bibinfo{pages}{159--175}.
\bibitem[{Trigeassou et~al.(2012)Trigeassou, Maamri, Sabatier and
  Oustaloup}]{Trigeassou:2012SIVP}
\bibinfo{author}{Trigeassou\xfnm[ J.C.]}, \bibinfo{author}{Maamri\xfnm[ N.]},
  \bibinfo{author}{Sabatier\xfnm[ J.]}, \bibinfo{author}{Oustaloup\xfnm[ A.]}.
\newblock \bibinfo{title}{Transients of fractional-order integrator and
  derivatives}.
\newblock \bibinfo{journal}{Signal Image Video Process}
  \bibinfo{year}{2012};\bibinfo{volume}{6}(\bibinfo{number}{3}):\bibinfo{pages}{359--372}.
\bibitem[{Trigeassou et~al.(2013)Trigeassou, Maamri and
  Oustaloup}]{Trigeassou:2013CMA}
\bibinfo{author}{Trigeassou\xfnm[ J.C.]}, \bibinfo{author}{Maamri\xfnm[ N.]},
  \bibinfo{author}{Oustaloup\xfnm[ A.]}.
\newblock \bibinfo{title}{The infinite state approach: origin and necessity}.
\newblock \bibinfo{journal}{Computers \& Mathematics with Applications}
  \bibinfo{year}{2013};\bibinfo{volume}{66}(\bibinfo{number}{5}):\bibinfo{pages}{892--907}.
\bibitem[{Wei et~al.(2015{\natexlab{b}})Wei, Du, Cheng and Wang}]{Wei:2015JOTA}
\bibinfo{author}{Wei\xfnm[ Y.H.]}, \bibinfo{author}{Du\xfnm[ B.]},
  \bibinfo{author}{Cheng\xfnm[ S.S.]}, \bibinfo{author}{Wang\xfnm[ Y.]}.
\newblock \bibinfo{title}{Fractional order systems time-optimal control and its
  application}.
\newblock \bibinfo{journal}{Journal of Optimization Theory and Applications}
  \bibinfo{year}{2015}{\natexlab{b}};\bibinfo{note}{DOI:
  10.1007/s10957-015-0851-4}.
\bibitem[{Han(2009)}]{Han:2009TIE}
\bibinfo{author}{Han\xfnm[ J.Q.]}.
\newblock \bibinfo{title}{From {PID} to active disturbance rejection control}.
\newblock \bibinfo{journal}{IEEE Transactions on Industrial Electronics}
  \bibinfo{year}{2009};\bibinfo{volume}{56}(\bibinfo{number}{3}):\bibinfo{pages}{900--906}.
\bibitem[{{La Salle}(1965)}]{LaSalle:1965DEDS}
\bibinfo{author}{{La Salle}\xfnm[ J.P.]}.
\newblock \bibinfo{title}{An invariance principle in the theory of stability}.
\newblock In: \bibinfo{booktitle}{International Symposium on Differential
  Equations and Dynamical Systems}. \bibinfo{address}{Puerto Rico,~USA};
  \bibinfo{year}{1965}, p. \bibinfo{pages}{277--286}.
\bibitem[{Lin et~al.(2012)Lin, Wang and Yao}]{Lin:2012NODY}
\bibinfo{author}{Lin\xfnm[ D.]}, \bibinfo{author}{Wang\xfnm[ X.Y.]},
  \bibinfo{author}{Yao\xfnm[ Y.]}.
\newblock \bibinfo{title}{Fuzzy neural adaptive tracking control of unknown
  chaotic systems with input saturation}.
\newblock \bibinfo{journal}{Nonlinear Dynamics}
  \bibinfo{year}{2012};\bibinfo{volume}{67}(\bibinfo{number}{4}):\bibinfo{pages}{2889--2897}.

\end{thebibliography}







\end{document}